\theoremstyle{plain}
\newtheorem {thm }{Theorem}
 \newtheorem*{thm*}{Theorem} % Define an unnumbered theorem environment
\newtheorem{prop}{Proposition}
\newtheorem{lem}{Lemma}
\newtheorem{rem}{Remark}
\newtheorem{exa}{Example}
\theoremstyle{definition}
\newtheorem{defi}{Definition}
\theoremstyle{remark}
\crefname{thm}{Thm.}{}
\crefname{prop}{Prop.}{}
\crefname{lem}{Lem.}{}
\crefname{cor}{Cor.}{}
\newcommand{\Z}{\mathbb Z}
\newcommand\Q{\mathbb Q}
\newcommand\C{\mathbb C}
\renewcommand\P{\mathbb P}
\renewcommand\a{\alpha}
\renewcommand\b{\beta}
 \newcommand\w{\omega}
 \newcommand\s{\sigma}
\newcommand\M{\mathcal M}
\newcommand\cS{\mathcal S}
\newcommand\cT{\mathcal T}
\newcommand\<{\langle}
\def\>{\rangle}
\DeclareMathOperator\PGL{PGL}
\DeclareMathOperator\Aut{Aut}
\DeclareMathOperator\Res{Res}
\DeclareMathOperator\Rat{Rat}
\DeclareMathOperator{\GL}{GL}
\DeclareMathOperator{\SL}{SL}
\DeclareMathOperator{\cR}{\mathcal R}
\DeclareMathOperator{\I}{\mathcal I}
\DeclareMathOperator{\J}{\mathcal J}
\newcommand\cN{\mathcal N}
\newcommand\RR{\mathcal R}
\newcommand\A{\mathbb A}
\newcommand\p{\mathfrak p}
\newcommand\iso{\cong}
\renewcommand\l{\lambda}
\DeclareMathOperator\ord{ord }
\DeclareMathOperator\Proj{Proj }
\DeclareMathOperator\Fix{\mathbf{Fix} }
\DeclareMathOperator\F{\mathbf F}
\DeclareMathOperator\G{\mathbf G}
\DeclareMathOperator\chara{char }
\DeclareMathOperator\lcm{lcm}
\def\L{\mathcal L}
\def\s{\sigma}
\def\c{c}
\newcommand\Img{\mathrm{Img}}
\newcommand\cP{\mathcal P}
\newcommand\cM{\mathcal M}
\title[Rational Functions on the Projective Line]{Rational Functions on the Projective Line from a Computational Viewpoint}
\author{Eslam Badr}
\address{Department of Mathematics,   American University, Cairo, Egypt}
\email{eslammath@aucegypt.edu}
\author{Elira Shaska}
\address{Department of Computer Science,  Oakland University, Rochester, MI, 48309}
\email{elirashaska@oakland.edu}
\author{Tony Shaska}
\address{Department of Mathematics,   Oakland University, Rochester, MI, 48309}
\email{tanush@umich.edu}
\begin{document}

\begin{abstract}
An explicit invariant-theoretic description of the moduli space \(\cM_3^1\) of degree-three rational maps on \(\P^1\) is developed. A cubic map \(\phi\) is represented, up to conjugation, by the pair of binary forms \((f, g) \in V_4 \oplus V_2\) arising from its Clebsch–Gordan decomposition. From this representation one constructs weighted projective invariants \(\xi_0,\dots,\xi_5\) that embed \(\cM_3^1\) into \(\P_{(2,2,3,3,4,6)}^5\) onto the locus where the gcd of the weights of the non-zero coordinates equals \(1\), together with absolute invariants defined as weight-zero rational functions of the \(\xi_i\), normalized by an additional invariant \(I_6\) of weight \(6\). These absolute invariants determine the isomorphism class uniquely.

The stratification of \(\cM_3^1\) is described explicitly by equations in the absolute invariants or polynomial relations among the \(\xi_i\). Computational illustrations demonstrate that the resulting invariants provide an effective feature set for automated classification of automorphism groups. The methods suggest natural extensions to higher degrees.
\end{abstract}

\keywords{Moduli spaces of rational maps, invariant theory, 
          binary forms, weighted projective spaces, automorphism groups}

\subjclass[2020]{14H10, 13A50, 14L30}

\maketitle

%************************
 
%************************
\section{Introduction}\label{sec:intro}
Let $k$ be an algebraically closed field of characteristic zero and $\P_k^1$ the projective line over $k$.
A degree $d\geq 2$ rational function $\phi: \P^1 \to \P^1$ is given as the ratio of two degree $d$ binary forms, say $\phi(x, y) = \frac{f_0(x, y)}{f_1(x, y)}$ such that the resultant between $f_0(x, y)$ and $f_1(x, y)$ is non-zero. Hence, a rational function is a pair of binary forms of the same degree with no common roots. If we denote
$f_0(x, y) = \sum_{i=0}^d a_i x^i y^{d-i}$
and
$f_1(x, y) = \sum_{i=0}^d b_i x^i y^{d-i},$
then the collection of pairs $[f_0 : f_1]$ can be parametrized via
$[a_d : \cdots : a_0 : b_d : \cdots : b_0] \in \P^{2d+1},$
such that $\Res(f_0, f_1) \neq 0$. So the parameter space of degree $d>1$ rational functions on $\P^1$ is the complement of the resultant locus in $\P^{2d+1}$, say $\Rat_d^1 := \P^{2d+1}\setminus V(\Res)$.

The group $\SL_2(k)$ acts on $\Rat_d^1$ by conjugation, i.e., for some $M \in \SL_2(k)$,
$
\phi \to \phi^M := M^{-1} \circ \phi \circ M.
$
Two rational functions $\phi, \psi \in \Rat_d^1$ are called \textbf{conjugate} if there is an $M\in \SL_2(k)$ such that $\phi=\psi^M$.
The moduli space of degree $d>1$ rational functions (in one variable) is denoted by $\M_d^1$ and can be constructed as a quotient space of this $\SL_2$-action.

The automorphism group of $\phi$ is defined as
\[
\Aut(\phi) := \{ \sigma \in \PGL_2(k) \mid \phi^{\sigma}=\phi \}.
\]
It is a finite subgroup of $\PGL_2(k)$, so it is isomorphic to one of the following: a cyclic group $C_n$, a dihedral group $D_n$, $A_4$, $S_4$, or $A_5$. Determining which one of these groups occur for a fixed degree $d\geq 2$ is studied in \cite{miasnikov, deg-3-4, faria}.

For any $\phi(x, y) = \frac{f_0(x, y)}{f_1(x, y)}$, we define degree $(d+1)$ and $(d-1)$
binary forms as
\[
\I_\phi := y f_0 - x f_1\; \; \text{ and } \; \; \J_\phi := \frac{\partial f_0}{\partial x} + \frac{\partial f_1}{\partial y}.
\]
Any two degree $d$ rational functions $\phi$ and $\psi$ are conjugate for some $M \in \PGL_2(k)$ via $\psi = \phi^M$ if and only if $\I_{\psi} = \I_{\phi}^M$ and $\J_{\psi} = \J_{\phi}^M$; see \cref{thm-1}. Moreover, there is a one-to-one correspondence between degree $d$ rational functions $\phi(x)$ and points $(f, g) \in V_{d+1} \oplus V_{d-1}$ such that
\[
\Res\left( x g + {\frac{\partial f}{\partial y}}, y g - \frac{\partial f}{\partial x} \right) \neq 0.
\]
Hence, determining invariants of rational functions is the same as determining generators for the ring of invariants of $V_{d+1} \oplus V_{d-1}$, which can be determined using a result of Clebsch once generators of the ring of invariants for $V_{d+1}$ and $V_{d-1}$ are known.

Denote by $\cR_{(d+1, d-1)}$ the ring of invariants of $V_{d+1} \oplus V_{d-1}$ and $(\xi_0, \ldots, \xi_n)$ the tuple of generators of this ring with degrees $(q_0, \ldots, q_n)$ respectively. Since all $\xi_0, \dots, \xi_n$ are homogeneous polynomials, then $\cR_{(d+1, d-1)}$ is a graded ring and $\Proj \cR_{(d+1, d-1)}$ is a weighted projective space denoted by $\P_\w^n(k)$, for some set of weights
where $\w = (q_0, \ldots, q_n)$. 
Thus for each $\phi \in \cP_d$, we evaluate its invariants and have a map $\xi: \Rat_d^1\to \P_{\w}^n$ via
\[
\phi \to [\xi_0(\phi) : \cdots : \xi_n(\phi)].
\]
Determining automorphism groups, describing the inclusions among their loci, and analyzing fields of definition becomes increasingly subtle as the degree $d$ increases; see \cite{silv}. The invariant-theoretic description of $\M_d^1$ grows rapidly in complexity,  and for $d>3$ the structure of the rings $\cR_{(d+1,d-1)}$ is largely unknown.

This setting is closely analogous to the moduli theory of hyperelliptic and superelliptic curves, where explicit invariants play a central role in making the geometry computationally accessible. In the case $d=3$, the invariant theory is fully explicit. For the representation $V_4\oplus V_2$ the ring of invariants is generated by six homogeneous polynomials of degrees $2,2,3,3,4,$ and $6$, giving an embedding
\[
   \M_3^1 \hookrightarrow \P_{(2,2,3,3,4,6)}^{5}
\]
onto the locus where the gcd of the weights of the non-zero coordinates equals $1$ (see \cref{prop-1}). 
We compute these generators and determine the loci corresponding to each finite automorphism group, obtaining equations for the strata with groups $C_2, C_3, C_4, V_4, D_4,$ and $A_4$. As an arithmetic application, we enumerate rational cubics over $\Q$ of naive height $\le 4$ and study their images in the weighted projective moduli space. Working in $\M_3^1$ removes the redundancies inherent in the coefficient space and allows the automorphism strata to be described cleanly and explicitly.

Although the case $d=3$ serves as a complete model, extending this framework to $d>3$ remains an open problem, chiefly because the generators of $\cR_{(d+1,d-1)}$ are unknown. The present work illustrates how an explicit invariant-theoretic description of $\M_d^1$ enables a systematic analysis of automorphism loci and suggests a path
forward for higher degrees.

%***************************************************
\section{Preliminaries}
Let $k$ be an algebraically closed field,  $\P^N(k)$  the projective $N$-space over $k$, and  $k[x, y]$  be the  polynomial ring in  two variables. Throughout this paper we assume $\chara (k)=0$.
By    $V_d$ we denote  the $(d+1)$-dimensional  subspace  of  $k[x, y]$  consisting of homogeneous polynomials   
\[
f(x, y) =    a_d x^d + a_{d-1}x^{d-1} y + \cdots a_1 x y^{d-1} + a_0 y^d, 
\] 
of  degree $d$ (up to multiplication by a scalar).  Elements  in $V_d$  (up to   multiplication by a constant)  are called  \textbf{binary  forms} of degree $d$.    $\GL_2(k)$ acts as a group of automorphisms on $k[x, y] $   as follows:
\begin{equation}
 M =
\begin{bmatrix} a &b \\  c & d
\end{bmatrix}
\in \GL_2(k), \text{   then       }
\quad  M  \begin{bmatrix} x\\ y \end{bmatrix} = \begin{bmatrix} ax+by\\ cx+dy \end{bmatrix}
\end{equation}
Denote by $f^M$ the binary form  $f^M (x, y) := f(ax+by, cx +d y)$.   
It is well known that while $\SL_2(k)$ does not fix a binary form, it preserves its equivalence class under the natural change-of-variables action.
%It is well  known that $\SL_2(k)$ leaves a binary  form (unique up to scalar multiples) on $V_d$ invariant. 

Consider $a_0$, $a_1$,  ... , $a_d$ as parameters  (coordinate  functions on $V_d$). Then the coordinate  ring of $V_d$ can be identified with $ k [a_0 ,  \ldots , a_d] $. For $I \in k [a_0, \ldots , a_d]$ and $M \in \GL_2(k)$, define  $I^M \in k[a_0, \dots , a_d]$  as ${I^M}(f):= I( f^M)$,   for all $f \in V_d$. Then  $I^{MN} = (I^{M})^{N}$ and  ${I^M}(f)$ define an action of $\GL_2(k)$ on $k[a_0, \dots , a_d]$.

A homogeneous polynomial $I\in k[a_0, \dots , a_d, x, y]$ is called a \textbf{covariant}  of index $s$ if   $I^M(f)=\delta^s I(f)$,  for all $f\in V_d$,   where $\delta =\det(M)$.  The homogeneous degree in $a_0, \dots , a_d$ is called the \textbf{degree} of $I$,  and the homogeneous degree in $x, y$ is called the \textbf{order} of $I$.  A covariant of order zero is called \textbf{invariant}.  An invariant is a $\SL_2(k)$-invariant on $V_d$.

By Hilbert's basis theorem  the ring of invariants of binary forms is finitely generated. We denote by $\RR_{d}$ the ring of invariants of the binary forms of degree $d$.   Then,    $\RR_{d}$ is a finitely generated   graded ring; see \cite{2004-3} for details among many other sources. 
 
%------------
\subsection{Change of coordinates}
Let $I_0, \dots  , I_n$ be the generators of $\RR_{d}$ with degrees $q_0, \dots , q_n$ respectively.  For any two binary forms $f$ and $g$,   $f=g^M$,    $M\in \GL_2 (k)$,     if and only if 
\begin{equation}\label{prop-2}
\left( I_0 (f), \dots   I_{i} (f), \dots , I_{n} (f) \right) = \left( \l^{q_0} \, I_0 (g), \dots ,    \l^{q_i}\,  I_i (g), \dots , \l^{q_n} \, I_n (g)   \right), 
\end{equation}
where   $\l = \left( \det M \right)^{\frac d 2}$.

%------------
\subsection{Generators of the ring of invariants}\label{invariants}
Let   $V_d$ be the space of degree $d>1$ binary forms defined over $k$, and    $\cR_d$  the ring of invariants.    Below we list the generating set of $\cR_d$ for $d\leq 10$.   We assume that the  binary forms are  given in standard form 
$f(x, y)= \sum_{i=0}^d \binom{d}{i}  a_i x^i y^{d-i}$.
For   $f, g \in V_d$  the \textbf{$r$-th transvectant} of $f$ and $g$ is defined as
\[(f,g)_r:= \frac {(m-r)! \, (n-r)!} {n! \, m!} \, \,
\sum_{k=0}^r (-1)^k
\begin{bmatrix} r \\ k
\end{bmatrix} \cdot
\frac {\partial^r f} {\partial x^{r-k} \, \,  \partial y^k} \cdot \frac {\partial^r g} {\partial x^k  \, \, \partial y^{r-k} },
\]
While there is no method known to determine a minimal  generating set of invariants for any $\cR_d$,  we display such sets for  $d= 3, 4$ an their degree for $d\leq 10$  as in \cite{curri}.

\begin{exa}[Cubics]   A generating set for $\cR_3$ is   $\xi = [ \xi_0 ]$, where 
 \begin{equation}\label{inv-cubics}
 \xi_0= \left(  (f, f)_2, (f, f)_2    \right)_2 = -54 a_0^2 a_3^2+36 a_1 a_3 a_0 a_2-8 a_2^3 a_0-8 a_1^3 a_3+2 a_2^2 a_1^2
 \end{equation}
 \end{exa}

\begin{exa}[Quartics]   A generating set for $\cR_4$ is   $\xi = [ \xi_0 , \xi_1]$ with $\w=(2,3)$ is the weight of the generating set,   where 
%Let $c_1 = (f, f)_2$.  Then   $J_2 = (f, f)_4$  and $J_3 = (f, c_1)_4$. 
%
\( \xi_0 =   (f, f)_4\)     and \( \xi_1  = \left(   f, (f, f)_2   \right)_4\).
\end{exa}
%

\iffalse
For $ d=6 $ to $ d=10 $, generators are  displayed in \cite{curri}.    In \cref{tab:invariants}    are    their   weights. 
%\subsubsection{Higher Degrees}
%
\begin{table}[h!]
 \caption{Generators of $ \cR_d $ for $ d=6 $ to $ d=10 $}
\begin{center}
\begin{tabular}{|c|c|c|}
\hline
$ d $ & Weights $ \w $ & \# Generators \\
\hline
6 & (2, 4, 6, 10) & 4 \\
7 & (4, 8, 12, 12, 20) & 5 \\
8 & (2, 3, 4, 5, 6, 7) & 6 \\
9 & (4, 8, 10, 12, 12, 14, 16) & 7 \\
10 & (2, 4, 6, 6, 8, 9, 10, 14, 14) & 9 \\
\hline
\end{tabular}
\end{center}
\label{tab:invariants}
\end{table}
%
\fi
  
%*********************   
\section{Invariants of Rational Functions}
This section derives the invariants of degree $d > 1$ rational functions on $\P^1(k)$, where $k$ is algebraically closed. Our goal is to classify these functions up to $\PGL_2(k)$-equivalence, compute the invariant ring $\cR_{(d+1, d-1)}$, and specialize to $d=3$ for explicit invariants that enable machine learning analysis of the moduli space $\M_3^1$. 

If one fixes homogeneous coordinates $x, y$ on $\P^1$, then any rational function $\phi: \P^1 \to \P^1$ of degree $d > 1$ can be realized as
\begin{equation}
\phi(x, y) = \frac{f_0(x, y)}{f_1(x, y)},
\end{equation}
where $d = \max \{\deg f_0, \deg f_1\}$ and $f_0$ and $f_1$ are homogeneous polynomials of degree at most $d$.
Conversely, a pair of homogeneous polynomials of degree $d$ in $x$ and $y$ determine a rational function
\begin{equation}\label{phi}
\phi(x, y) = [f_0(x, y) : f_1(x, y)]
\end{equation}
if $f_0, f_1$ have no common roots in $k$.
For a fixed degree $d > 1$, the collection of all such pairs of homogeneous polynomials $[f_0(x, y) : f_1(x, y)]$, say
\begin{equation}
f_0 = \sum_{i=0}^d a_i x^{d-i} y^i 
\quad 
\text{  and  } 
\quad 
f_1 = \sum_{i=0}^d b_i x^{d-i} y^i,
\end{equation}
can be naturally parametrized as the projective space $\P^{2d+1}$, via
\[
[f_0 : f_1] \to [a_0 : a_1 : \dots : a_d : b_0 : \dots : b_d] \in \P^{2d+1}.
\]
We denote the resultant of two binary forms $f_0$ and $f_1$ by $\Res(f_0, f_1)$. Notice that it’s well-defined and a degree $2d$ homogeneous polynomial in 
\begin{equation}\label{I-2d}
I_{2d}(\phi) := \Res(f_0, f_1) \in k[a_0, \ldots, a_d, b_0, \ldots, b_d].
\end{equation}
Hence,  $I_{2d}(\phi)$ is an $\SL_2(k)$-invariant of degree $2d$. Moreover, $\phi$ is a rational function on $\P^1$ if and only if $I_{2d}(\phi) \neq 0$.

We can construct the parameter space of rational functions on $\P^1$ as the complement of the vanishing locus $V(I_{2d})$ of $I_{2d}$. Hence the \textbf{space of rational functions of degree $d$ on $\P^1$} is defined as
\[
\Rat_d^1 := \P^{2d+1} \setminus V(I_{2d}).
\]
The action of $\PGL_2(k)$ on $V_d$ extends naturally to an action on $\Rat_d^1$. For each $\sigma \in \PGL_2(k)$, we have $\PGL_2(k) \times \Rat_d^1 \to \Rat_d^1$ via
\[
\begin{split}
\left( \sigma, \phi(x, y) \right) & \to \phi^\sigma := \sigma^{-1} \phi \sigma,
\end{split}
\]
Two rational functions $\phi, \psi \in \Rat_d^1$ are called $k$-\textbf{conjugate} if and only if there exists a matrix $\sigma \in \PGL_2(k)$ such that 
\[
\psi = \sigma^{-1}  \phi  \sigma.
\]

\begin{rem}
Notice two different uses of notation $\phi^\sigma$ for rational functions and $f^\sigma$ for binary forms.
\end{rem}

The function $\phi^\sigma = \sigma^{-1} \phi \sigma$ is explicitly given as
\begin{equation}\label{phi-M}
\begin{split}
\left( \sigma^{-1} \phi \sigma \right)(x) &= \frac{e \, f_0(a x + b, c x + e) - b \, f_1(a x + b, c x + e)}{-c \, f_0(a x + b, c x + e) + a \, f_1(a x + b, c x + e)} 
= \frac{e \, f_0^\sigma - b \, f_1^\sigma}{-c \, f_0^\sigma + a \, f_1^\sigma}
\end{split}
\end{equation}
Let $\phi(x, y)$ and $\psi(x, y)$ be degree $d \geq 2$ rational functions given by
\begin{equation}
\phi(x, y) = \frac{f_0(x, y)}{f_1(x, y)} \quad \text{and} \quad \psi(x, y) = \frac{g_0(x, y)}{g_1(x, y)}.
\end{equation}
By \cref{phi-M}, $\phi$ and $\psi$ are $k$-conjugate if and only if there is $\sigma = \begin{bmatrix} a & b \\ c & e \end{bmatrix} \in \PGL_2(k)$ such that
\begin{equation}\label{eq-3}
g_0 = e f_0^\sigma - b f_1^\sigma \quad \text{and} \quad g_1 = -c f_0^\sigma + a f_1^\sigma.
\end{equation}
\begin{defi}
For $\phi(x, y) = \frac{f_0(x, y)}{f_1(x, y)}$, define its \textbf{associated pair of forms} as
\begin{equation}\label{F-G}
\I_\phi := y f_0 - x f_1 \in V_{d+1}   \quad \text{and} \quad \J_\phi := \frac{\partial f_0}{\partial x} + \frac{\partial f_1}{\partial y}  \in V_{d-1}
\end{equation}
\end{defi}

\begin{lem}\label{thm-1}
Let $\phi, \psi \in \Rat_d^1$ and $\sigma \in \PGL_2(k)$. Then $\psi = \phi^\sigma$ if and only if
\[
\I_\psi = \I_\phi^\sigma \quad \text{   and } \quad \J_\psi = \J_\phi^\sigma.
\]
\end{lem}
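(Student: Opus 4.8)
The plan is to verify the equivalence by direct computation, exploiting the fact that the maps $\phi \mapsto \I_\phi$ and $\phi \mapsto \J_\phi$ are built from the binary-form transformations $f_0, f_1 \mapsto f_0^\sigma, f_1^\sigma$ together with the substitution rule \eqref{eq-3} that records how $(f_0, f_1)$ transforms under conjugation. First I would fix $\sigma = \begin{pmatrix} a & b \\ c & e \end{pmatrix} \in \PGL_2(k)$ and assume $\psi = \phi^\sigma$, so that by \eqref{eq-3} the numerator and denominator of $\psi$ are $g_0 = e f_0^\sigma - b f_1^\sigma$ and $g_1 = -c f_0^\sigma + a f_1^\sigma$. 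The goal is then to show $\I_\psi = y g_0 - x g_1$ equals $\I_\phi^\sigma = (y f_0 - x f_1)^\sigma$, and similarly for $\J$.

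For the $\I$ statement, I would substitute the expressions for $g_0, g_1$ into $y g_0 - x g_1$ and rearrange:
\[
y g_0 - x g_1 = y(e f_0^\sigma - b f_1^\sigma) - x(-c f_0^\sigma + a f_1^\sigma) = (e y + c x) f_0^\sigma - (b y + a x) f_1^\sigma.
\]
On the other hand, $\I_\phi^\sigma = (y f_0 - x f_1)^\sigma$ means we substitute $x \mapsto ax + b y$, $y \mapsto cx + e y$ (the action in equation (1)) into $y f_0 - x f_1$; this gives $(cx + ey)\, f_0^\sigma - (ax + by)\, f_1^\sigma$, which matches the line above. The key point making this work is that "$\sigma$ acting on $\I_\phi$" distributes over products because the action is by substitution of linear forms, so $(y f_0)^\sigma = (cx+ey) f_0^\sigma$ and $(x f_1)^\sigma = (ax+by) f_1^\sigma$.

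For the $\J$ statement the computation is analogous but uses the chain rule rather than mere substitution: I would compute $\partial g_0/\partial x + \partial g_1/\partial y$ in terms of derivatives of $f_0^\sigma, f_1^\sigma$, then re-express $\partial f_i^\sigma/\partial x$ and $\partial f_i^\sigma/\partial y$ via the chain rule in terms of $(\partial f_i/\partial x)^\sigma$ and $(\partial f_i/\partial y)^\sigma$, picking up the entries of $\sigma$ as Jacobian factors. One then checks that the $b, c$ terms coming from the $e f_0^\sigma - b f_1^\sigma$ combination cancel against the Jacobian cross-terms, leaving exactly $\left(\partial f_0/\partial x + \partial f_1/\partial y\right)^\sigma = \J_\phi^\sigma$; here the hypothesis $\det \sigma = ae - bc$ (normalized, or tracked projectively) is what makes the bookkeeping close up. This cancellation is the one genuinely delicate step, since it is where the specific shape of the definition $\J_\phi = \partial f_0/\partial x + \partial f_1/\partial y$ — as opposed to some other differential combination — is forced.

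Finally, for the converse direction, I would argue that the pair $(\I_\phi, \J_\phi)$ determines $\phi$: from $\I_\phi = y f_0 - x f_1$ one recovers $f_0$ and $f_1$ as the quotient and remainder (up to the obvious normalization) upon dividing by $y$ and $x$, or more invariantly by noting $f_0 = \I_\phi|_{\text{coeff extraction}}$ — concretely $f_1 = -\partial_x$-type operations recovering $f_0, f_1$ from $\I_\phi$ alone, with $\J_\phi$ redundant for reconstruction but needed for the degenerate cases where $\deg f_i < d$. Thus if $\I_\psi = \I_\phi^\sigma$ and $\J_\psi = \J_\phi^\sigma$, applying the (injective) reconstruction shows $(g_0, g_1)$ is obtained from $(f_0, f_1)$ by exactly the substitution \eqref{eq-3}, i.e. $\psi = \phi^\sigma$. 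I expect the main obstacle to be handling the reconstruction cleanly when one or both of $f_0, f_1$ drop degree, so that the correspondence $\phi \leftrightarrow (\I_\phi, \J_\phi)$ is genuinely bijective and not merely well-defined in one direction; the forward implication itself is a routine, if slightly lengthy, chain-rule calculation.
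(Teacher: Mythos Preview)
Your forward direction is correct and matches the paper's computation essentially line for line. The converse, however, contains a genuine misunderstanding: you claim that from $\I_\phi = y f_0 - x f_1$ one can recover $f_0, f_1$ ``from $\I_\phi$ alone, with $\J_\phi$ redundant for reconstruction but needed for the degenerate cases.'' This is false. Given any $h \in V_{d-1}$, the pair $(f_0 - x h,\; f_1 - y h)$ produces the \emph{same} $\I$, since $y(f_0 - xh) - x(f_1 - yh) = yf_0 - xf_1$. So $\I_\phi$ determines $(f_0,f_1)$ only up to a $(d{-}1)$-dimensional ambiguity, and this ambiguity is present generically, not merely in degenerate cases.

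What kills this ambiguity is precisely $\J_\phi$: under $(f_0,f_1) \mapsto (f_0 - xh, f_1 - yh)$ one computes, using Euler's identity for the degree-$(d{-}1)$ form $h$, that $\J$ shifts by $-(d+1)h$. Hence $\J_\psi = \J_\phi^\sigma$ forces $h=0$. This is exactly the mechanism in the paper's proof: from $\I_\psi = \I_\phi^\sigma$ one deduces $g_0 = e f_0^\sigma - b f_1^\sigma + xh$ and $g_1 = -c f_0^\sigma + a f_1^\sigma + yh$ for some $h \in V_{d-1}$, and then the $\J$-condition eliminates $h$. Your high-level strategy (reduce the converse to injectivity of $\phi \mapsto (\I_\phi,\J_\phi)$, then combine with the forward direction) is perfectly sound and is essentially a repackaging of the paper's argument; but the injectivity step itself requires the $h$-calculation above, not a coefficient-extraction from $\I_\phi$ alone.
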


\begin{proof}
Let $\phi = \frac{f_0}{f_1}$ and $\psi = \frac{g_0}{g_1}$. Assume that $\phi$ and $\psi$ are $k$-conjugate in $\Rat_d^1$, meaning there exists $\sigma \in \PGL_2(k)$ such that $\psi = \phi^\sigma$.

Substituting the expressions for $\I_\psi = y g_0 - x g_1$ and using the values of $g_0$ and $g_1$ as in \cref{eq-3}, we have
\[
\begin{split}
\I_\psi &= y g_0 - x g_1 = y (e f_0^\sigma - b f_1^\sigma) - x (-c f_0^\sigma + a f_1^\sigma) \\
&= (c x + e y) f_0^\sigma - (a x + b y) f_1^\sigma = \left( y f_0 - x f_1 \right)^\sigma = \I_\phi^\sigma.
\end{split}
\]
Similarly,
\[
\begin{split}
\J_\psi &= \frac{\partial g_0}{\partial x} + \frac{\partial g_1}{\partial y} = \frac{\partial (e f_0^\sigma - b f_1^\sigma)}{\partial x} + \frac{\partial (-c f_0^\sigma + a f_1^\sigma)}{\partial y} \\
&= e \frac{\partial f_0^\sigma}{\partial x} - b \frac{\partial f_1^\sigma}{\partial x} - c \frac{\partial f_0^\sigma}{\partial y} + a \frac{\partial f_1^\sigma}{\partial y} 
= \left( \frac{\partial f_0^\sigma}{\partial x} + \frac{\partial f_1^\sigma}{\partial y} \right)^\sigma = \J_\phi^\sigma.
\end{split}
\]
Thus, we conclude that $\I_\phi$ and $\I_\psi$ (and similarly, $\J_\phi$ and $\J_\psi$) are $k$-equivalent via $\sigma$ as binary forms.

Conversely, suppose that $\I_\phi$ and $\I_\psi$ (respectively, $\J_\phi$ and $\J_\psi$) are $k$-equivalent via $\sigma$ as binary forms. This means that $\I_\psi = \I_\phi^\sigma$ and $\J_\psi = \J_\phi^\sigma$. In particular, we have
\[
\begin{split}
\I_\psi &= y g_0 - x g_1 = \left( y f_0 - x f_1 \right)^\sigma = (c x + e y) f_0^\sigma - (a x + b y) f_1^\sigma, \\
\J_\psi &= \frac{\partial g_0}{\partial x} + \frac{\partial g_1}{\partial y} = \frac{\partial f_0^\sigma}{\partial x^\sigma} + \frac{\partial f_1^\sigma}{\partial y^\sigma}.
\end{split}
\]
From the equation for $\I_\psi$, we obtain
\[
y \left( g_0 - e f_0^\sigma + b f_1^\sigma \right) = x \left( g_1 + c f_0^\sigma - a f_1^\sigma \right),
\]
which leads to
$g_0 - e f_0^\sigma + b f_1^\sigma = x \cdot h(x, y)$,  and 
$g_1 + c f_0^\sigma - a f_1^\sigma  = y \cdot h(x, y)$, 
for some $h \in V_{d-1}$. Therefore, we have
\[
\begin{split}
\J_\psi &= 2 h + \left( x \frac{\partial h}{\partial x} + y \frac{\partial h}{\partial y} \right) + e \frac{\partial f_0^\sigma}{\partial x} + b \frac{\partial f_1^\sigma}{\partial x} - c \frac{\partial f_0^\sigma}{\partial y} + a \frac{\partial f_1^\sigma}{\partial y} \\
&= h + \frac{d-1}{2} h + \frac{\partial g_0}{\partial x} + \frac{\partial g_1}{\partial y}.
\end{split}
\]
Thus, we must have $h = 0$, which implies that $\phi^\sigma = \psi$ as claimed.
\end{proof}

Since the pair of binary forms $(\I_\phi, \J_\phi)$ determines the rational function $\phi$, we can use the classical theory of binary forms to determine invariants for $\phi$. Define
\begin{equation}\label{eq:Phi}
\Phi : \Rat_d^1  \to V_{d+1} \oplus V_{d-1}, 
\end{equation}
via $\Phi( \phi ) =  \left( \I_\phi, \J_\phi \right)$.
The inverse of $\Phi$ is not well-defined since not every pair $(f, g) \in V_{d+1} \oplus V_{d-1}$ determines a rational function.
\begin{defi}
For any $(f, g) \in V_{d+1} \oplus V_{d-1}$, define the \textbf{modular resultant}
\begin{equation}
\Delta_{(f,g)} = \Res \left( x g + \frac{\partial f}{\partial y}, \, y g - \frac{\partial f}{\partial x} \right),
\end{equation}
and the \textbf{moduli resultant locus} $\cN$ as
\begin{equation}\label{cN}
\cN := \{ (f, g) \in V_{d+1} \oplus V_{d-1} \mid \Delta_{(f,g)} = 0 \}.
\end{equation}
\end{defi}

Then, we have the following  result which has also appeared in \cite{west}. 
\begin{lem}
The map 
$
\Phi : \Rat_d^1 \to (V_{d+1} \oplus V_{d-1}) \setminus \cN
$
 is bijective. Moreover, for any $(f, g) \in V_{d+1} \oplus V_{d-1} \setminus \cN$,
\[
\Phi^{-1}(f, g) = \frac{x g + \frac{\partial f}{\partial y}}{y g - \frac{\partial f}{\partial x}}.
\]
\end{lem}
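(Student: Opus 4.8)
The plan is to construct an explicit inverse map and verify the two compositions are the identity, thereby proving bijectivity. The candidate inverse is already named in the statement: to a pair $(f,g) \in V_{d+1} \oplus V_{d-1}$ we associate the pair of binary forms
\[
g_0 := x g + \frac{\partial f}{\partial y}, \qquad g_1 := y g - \frac{\partial f}{\partial x},
\]
both of which lie in $V_d$ (a quick degree count: $xg$ and $yg$ are degree $d$, and the partials of a degree-$(d+1)$ form are degree $d$), and then send $(f,g)$ to the rational function $\psi := g_0/g_1$. The condition $(f,g) \notin \mathcal N$ is exactly $\Delta_{f,g} = \Res(g_0, g_1) \neq 0$, which is precisely the condition that $g_0$ and $g_1$ have no common root, i.e. that $\psi \in \Rat_d^1$. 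So the assignment $(f,g) \mapsto \psi$ does land in $\Rat_d^1$, and it is well-defined on $(V_{d+1}\oplus V_{d-1}) \setminus \mathcal N$.

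It remains to check $\Phi \circ \Psi = \mathrm{id}$ and $\Psi \circ \Phi = \mathrm{id}$, where $\Psi$ denotes the candidate inverse. For $\Psi \circ \Phi = \mathrm{id}$: starting from $\phi = f_0/f_1$, we have $\I_\phi = yf_0 - xf_1$ and $\J_\phi = \partial f_0/\partial x + \partial f_1/\partial y$, and we must show $x\J_\phi + \partial \I_\phi/\partial y = \lambda f_0$ and $y\J_\phi - \partial \I_\phi/\partial x = \lambda f_1$ for a common nonzero scalar $\lambda$ (projectively this gives back $\phi$). Expanding $\partial \I_\phi/\partial y = f_0 + y\,\partial f_0/\partial y - x\,\partial f_1/\partial y$ and adding $x\J_\phi = x\,\partial f_0/\partial x + x\,\partial f_1/\partial y$, the cross terms cancel and Euler's identity $x\,\partial f_0/\partial x + y\,\partial f_0/\partial y = d f_0$ collapses the sum to $(d+1) f_0$; symmetrically the other expression gives $(d+1) f_1$. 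So $\lambda = d+1 \neq 0$ in characteristic zero, and $\Psi(\Phi(\phi)) = \phi$. For $\Phi \circ \Psi = \mathrm{id}$: starting from $(f,g)$, set $g_0, g_1$ as above and compute $\I_{\psi} = y g_0 - x g_1 = y\partial f/\partial y + x\partial f/\partial x = (d+1) f$ by Euler's identity again, while $\J_{\psi} = \partial g_0/\partial x + \partial g_1/\partial y = (\partial(xg)/\partial x + \partial(yg)/\partial y) + (\partial^2 f/\partial x\partial y - \partial^2 f/\partial y \partial x) = g + (x\partial g/\partial x + y\partial g/\partial y) + 0 = g + (d-1)g = d g$ by Euler on the degree-$(d-1)$ form $g$ and equality of mixed partials. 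Hence $\Phi(\Psi(f,g)) = ((d+1)f, dg)$, which equals $(f,g)$ as a point of the relevant space once one accounts for the scaling conventions built into $\Phi$ and $\Psi$ (the maps are between projectivized/affine cones and these nonzero scalars are harmless); alternatively, one normalizes $\Psi$ by the factor $\tfrac{1}{d+1}$ from the start.

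The only genuine subtlety — and the step I would be most careful about — is the bookkeeping of scalar factors and the precise sense in which $\Phi$ is "bijective": the identities above produce the input back up to the nonzero constants $d+1$ and $d$, so one must either (i) build those normalizations into the definition of $\Phi^{-1}$, or (ii) observe that $\Phi$ as written is honestly a set bijection because $\I_\phi, \J_\phi$ are defined from a specific representative $f_0/f_1$ and the formula $\Phi^{-1}(f,g) = (xg + \partial f/\partial y)/(y g - \partial f/\partial x)$ returns a rational function conjugate to — indeed equal as a rational map to — the original, since numerator and denominator are each scaled by the same $d+1$. Everything else is a direct Euler-identity computation with no real obstacle; the degree/no-common-root equivalence $\Delta_{f,g} \neq 0 \Leftrightarrow \psi \in \Rat_d^1$ is immediate from the definition of the resultant and of $\mathcal N$.
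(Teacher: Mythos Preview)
Your proof is correct and follows essentially the same route as the paper: construct the explicit candidate inverse and verify both compositions via Euler's identity, noting that the resultant condition defining $\mathcal N$ is exactly what guarantees the inverse lands in $\Rat_d^1$. One minor arithmetic slip: in computing $\J_\psi$ you should get $\partial(xg)/\partial x + \partial(yg)/\partial y = 2g + (x\,\partial g/\partial x + y\,\partial g/\partial y)$ (each product-rule term contributes a copy of $g$), so the result is $(d+1)g$ rather than $dg$---but as you correctly observe, the precise nonzero scalar is irrelevant to the argument.
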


\begin{proof}
The map $\Phi$ is  well-defined. Indeed, for $\varphi = f_{0}/f_{1}$ one verifies directly that
\[
x J_\varphi + \frac{\partial I_\varphi}{\partial y} = d\, f_{0}, \qquad
y J_\varphi - \frac{\partial I_\varphi}{\partial x} = d\, f_{1},
\]
and hence
$
\Delta_{I_\varphi, J_\varphi}
= d^{\,2d}\, \operatorname{Res}(f_{0}, f_{1}) \neq 0,
$
so $\Phi(\varphi)\notin N$.

Let $\phi, \psi \in \Rat_d^1$ such that $\Phi(\phi) = \Phi(\psi)$. Then, $\I_\phi = \I_\psi$ and $\J_\phi = \J_\psi$ as binary forms in $V_{d+1}$ and $V_{d-1}$, respectively. Since binary forms are defined up to multiplication by a scalar, there exists a diagonal matrix $\sigma \in \PGL_2(k)$ such that  $\I_\psi = \I_\phi^\sigma$   and $ \J_\psi = \J_\phi^\sigma$.
By \cref{thm-1}, we conclude that $\psi = \phi^\sigma = \phi$. Thus, $\Phi$ is injective.

Now, assume that $(f, g) \in (V_{d+1} \oplus V_{d-1}) \setminus \cN$. The condition $\Delta_{(f,g)} \neq 0$ ensures that the preimage
\[
\Phi^{-1}(f, g) = \frac{x g + \frac{\partial f}{\partial y}}{y g - \frac{\partial f}{\partial x}}
\]
belongs to $\Rat_d^1$. Consequently, the map $\Phi^{-1}$ is well-defined.

To show that $\Phi$ and $\Phi^{-1}$ are inverses of each other, we will demonstrate that $\Phi \circ \Phi^{-1} = \text{id}$ on $(V_{d+1} \oplus V_{d-1}) \setminus \cN$ and $\Phi^{-1} \circ \Phi = \text{id}$ on $\Rat_d^1$.

First, compute $\Phi \circ \Phi^{-1}$ on $(V_{d+1} \oplus V_{d-1}) \setminus \cN$: Let $(f, g) \in (V_{d+1} \oplus V_{d-1}) \setminus \cN$ and  $\phi = \Phi^{-1}(f, g) = \frac{x g + \frac{\partial f}{\partial y}}{y g - \frac{\partial f}{\partial x}}$. Then,
\[
\begin{split}
\I_\phi &= y \left( x g + \frac{\partial f}{\partial y} \right) - x \left( y g - \frac{\partial f}{\partial x} \right) = y \frac{\partial f}{\partial y} + x \frac{\partial f}{\partial x} = (d + 1) f    \in  V_{d+1} \\   
\J_\phi &= \frac{\partial}{\partial x} \left[ x g + \frac{\partial f}{\partial y} \right] + \frac{\partial}{\partial y} \left[ y g - \frac{\partial f}{\partial x} \right] = 2 g + x \frac{\partial g}{\partial x} + y \frac{\partial g}{\partial y} = 2 g + (d - 1) g   \in V_{d-1} 
\end{split}
\]
Here $(d+1)f$ and $(d+1)g$ are nonzero scalar multiples of $f$ and $g$, and thus represent the same binary forms in $V_{d+1}$ and $V_{d-1}$.
Thus, $\I_\phi = f$ and $\J_\phi = g$, so $\Phi(\phi) = (\I_\phi, \J_\phi) = (f, g)$.

Second, compute $\Phi^{-1} \circ \Phi$ on $\Rat_d^1$: Let $\phi = \frac{f_0}{f_1} \in \Rat_d^1$. Then,
\[
\begin{split}
\left(\Phi^{-1} \circ \Phi\right)(\phi) &= \Phi^{-1}(\I_\phi, \J_\phi) = \Phi^{-1}\left(y f_0 - x f_1, \frac{\partial f_0}{\partial x} + \frac{\partial f_1}{\partial y}\right) \\
&= \frac{x \left( \frac{\partial f_0}{\partial x} + \frac{\partial f_1}{\partial y} \right) + \frac{\partial}{\partial y} \left[ y f_0 - x f_1 \right]}{y \left( \frac{\partial f_0}{\partial x} + \frac{\partial f_1}{\partial y} \right) - \frac{\partial}{\partial x} \left[ y f_0 - x f_1 \right]} \\
&= \frac{x \left( \frac{\partial f_0}{\partial x} + \frac{\partial f_1}{\partial y} \right) + \left( f_0 + y \frac{\partial f_0}{\partial y} - x \frac{\partial f_1}{\partial y} \right)}{y \left( \frac{\partial f_0}{\partial x} + \frac{\partial f_1}{\partial y} \right) - \left( y \frac{\partial f_0}{\partial x} - f_1 - x \frac{\partial f_1}{\partial x} \right)} \\
&= \frac{x \frac{\partial f_0}{\partial x} + y \frac{\partial f_0}{\partial y} + f_0}{x \frac{\partial f_1}{\partial x} + y \frac{\partial f_1}{\partial y} + f_1} = \frac{d f_0 + f_0}{d f_1 + f_1} = \frac{f_0}{f_1} = \phi.
\end{split}
\]
Since both compositions $\Phi \circ \Phi^{-1}$ and $\Phi^{-1} \circ \Phi$ are the identity maps, we conclude that $\Phi$ and $\Phi^{-1}$ are indeed inverse to each other. This completes the proof.
\end{proof}

%***************************************
\subsection{Ring of invariants $ \cR_{(d+1), (d-1)}$  and a theorem of Clebsch}
The action of $\GL_2(k)$ on $V_d$ induces an action of $\GL_2(k)$ in $V_{d+1} \oplus V_{d-1}$. To determine the isomorphism classes of degree $d$ rational functions, we have to determine the ring of invariants of $V_{d+1} \oplus V_{d-1}$. This is a well known in classical invariant theory.   We briefly describe it below;  see \cite{west} for details. 

Let $V$ be an $\SL_2$-module and $\mathcal{O}(V)$ the algebra of polynomial functions on $V$. $\SL_2(k)$ acts on $\mathcal{O}(V)$ via
\[
M \cdot p(f_1, \ldots, f_r) \to p(M^{-1} f_1, \ldots, M^{-1} f_r),
\]
for every $M \in \SL_2(k)$. An invariant of $V$ is an element $\cT \in \mathcal{O}(V)$ such that $M \cT = \cT$, for all $M \in \SL_2(k)$. The set of invariants is denoted by $\mathcal{O}(V)^{\SL_2}$.

A transvectant $(\cT, \cS)_l$ is called \textbf{irrelevant} if there exist $\cT_1, \cT_2, \cS_1, \cS_2$ and $l_1, l_2$ such that
\[
l = l_1 + l_2, \quad \cT = \cT_1 \cdot \cT_2, \quad \cS = \cS_1 \cdot \cS_2,
\]
and $l_1 \leq \ord \cT_1, \ord \cS_1$, and $l_2 \leq \ord \cT_2, \ord \cS_2$. A transvectant which is not irrelevant is called \textbf{relevant}.

Let $V$ and $W$ be two $\SL_2$-modules whose covariants are finitely generated, and assume
\begin{equation}
\begin{split}
& \cT_1, \ldots, \cT_r: \quad \text{are the generators of the covariants of } V \\
& \cS_1, \ldots, \cS_s: \quad \text{are the generators of the covariants of } W.
\end{split}
\end{equation}

\begin{thm*}[Clebsch]\label{Clebsch}
The ring of covariants of $V \oplus W$ is  finitely generated. Moreover, a finite generating system can be chosen from the set of all transvectants
\[
\left( \cT, \cS \right)_l, \quad \text{for} \quad l \geq 0,
\]
where $\cT$ is a monomial in the $\cT_i$'s and $\cS$ a monomial in the $\cS_j$'s. In other words, by the relevant transvectants $(\cT, \cS)_l$.
\end{thm*}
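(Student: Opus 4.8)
The plan is to deduce the statement from Hilbert's finiteness theorem together with the symbolic calculus of binary forms. First I would dispose of finite generation: since $\SL_2(k)$ is linearly reductive over the characteristic‑zero field $k$, the ring of covariants of the $\SL_2(k)$-module $V \oplus W$ is a finitely generated $k$-algebra (for binary forms this is Gordan's theorem, with a constructive proof). Thus the real content is the \emph{structural} assertion that the transvectants $(\mathcal{T}, \mathcal{S})_l$ of monomials in the $\mathcal{T}_i$ and $\mathcal{S}_j$ already span the ring of covariants; once that is known, extracting a finite — and then a \emph{relevant} — generating set is a cleanup step.

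For the structural assertion I would argue symbolically, reducing first (by decomposing the $\SL_2$-modules into irreducibles) to the case where $V$ and $W$ are spaces of binary forms of degrees $m$ and $n$. Write the generic element of $V$ as $a_x^m$ and that of $W$ as $b_x^n$ (using several equivalent letters $a, a', \dots$ and $b, b', \dots$ when the degree in the coefficients is higher), so that by the first fundamental theorem for $\SL_2(k)$ every joint covariant of $(f,g)$ is a $k$-linear combination of symbolic monomials built from brackets among the letters and linear factors $a_x, b_x$, each letter occurring with its prescribed multiplicity. Call a bracket \emph{mixed} if it pairs an $a$-letter with a $b$-letter, and induct on the number of mixed brackets in such a monomial. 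The base case of zero mixed brackets is immediate: the monomial factors as $\Theta \cdot \Xi$ with $\Theta$ a covariant of $V$ and $\Xi$ a covariant of $W$, hence $\Theta$ is a polynomial in $\mathcal{T}_1, \dots, \mathcal{T}_r$ and $\Xi$ a polynomial in $\mathcal{S}_1, \dots, \mathcal{S}_s$, so $\Theta\,\Xi = (\Theta, \Xi)_0$ is, by bilinearity of the transvectant in each argument, a combination of transvectants $(\mathcal{T}^\mu, \mathcal{S}^\nu)_0$. For the inductive step I would use the relations of the second fundamental theorem for $\SL_2(k)$ among brackets (the Plücker relations and the linear relations tying $a_x$, $b_x$ to the brackets) to assemble the $l$ mixed brackets of a monomial into the shape of a single transvection: this rewrites the monomial as a $k$-linear combination of expressions $(\Theta', \Xi')_l$, where $\Theta'$ is a monomial in $a$-letters and $\Xi'$ one in $b$-letters, plus terms carrying fewer mixed brackets. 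Since $\Theta'$ and $\Xi'$ have strictly smaller degree in the coefficients, the inductive hypothesis expresses them through the generators, and bilinearity of the transvectant again reduces $(\Theta', \Xi')_l$ to a combination of $(\mathcal{T}^\mu, \mathcal{S}^\nu)_l$.

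It remains to pass from ``all transvectants $(\mathcal{T}, \mathcal{S})_l$ span'' to a finite generating set consisting of relevant ones. Finiteness is automatic: by Hilbert's theorem the ring is a Noetherian $k$-algebra, so finitely many of the spanning transvectants already generate. To discard the irrelevant ones I would invoke Gordan's series for the transvectant of a product: if $(\mathcal{T}, \mathcal{S})_l$ is irrelevant, with $\mathcal{T} = \mathcal{T}_1 \mathcal{T}_2$, $\mathcal{S} = \mathcal{S}_1 \mathcal{S}_2$, $l = l_1 + l_2$ and $l_i \leq \min(\ord \mathcal{T}_i, \ord \mathcal{S}_i)$, then $(\mathcal{T}_1 \mathcal{T}_2, \mathcal{S}_1 \mathcal{S}_2)_l$ expands as $c\,(\mathcal{T}_1, \mathcal{S}_1)_{l_1}(\mathcal{T}_2, \mathcal{S}_2)_{l_2}$ with $c \neq 0$, plus a combination of products of transvectants of strictly smaller transvection index (and of covariants of smaller degree). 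A double induction on degree and on $l$ then eliminates every irrelevant transvectant in favour of relevant ones and products of covariants of strictly smaller degree, which completes the proof.

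The main obstacle is the inductive step of the structural assertion: organizing the bracket syzygies so that an arbitrary configuration of mixed brackets can always be brought to the transvection pattern $(\Theta', \Xi')_l$, with the remainder genuinely of lower complexity, so that the induction is well-founded. Equivalently, the technical heart is establishing Gordan's series — the expansion of a transvectant of products into transvectants of the factors plus controlled lower-order terms — with enough precision to run the inductions above; once that formula is in hand, the rest is bookkeeping on top of Hilbert's theorem.
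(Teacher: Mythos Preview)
The paper does not prove this theorem: it is stated as a classical result attributed to Clebsch, with only a pointer to the reference \cite{west} for details. So there is no ``paper's own proof'' to compare against; your proposal stands on its own as a proof sketch of a classical theorem.

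That said, your outline is essentially the standard classical argument and is sound in its broad strokes. The reduction of finite generation to Hilbert/Gordan is correct and immediate in characteristic zero. Your structural step --- inducting on the number of mixed brackets in the symbolic representation, using the first and second fundamental theorems for $\SL_2$ to peel off a transvection pattern --- is exactly the classical mechanism, and your use of Gordan's series to discard irrelevant transvectants is the textbook finishing move. You are right to flag the inductive reorganization of brackets as the technical crux: making that step precise requires writing out Gordan's expansion of $(\Theta_1\Theta_2,\Xi)_l$ (or the bilinear variant) with explicit coefficients and checking that the ``lower-order'' terms really decrease a well-founded complexity measure. Once that is in hand the rest is bookkeeping, as you say. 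One small caution: in your cleanup step you invoke Hilbert to extract finitely many generators \emph{before} reducing to relevant transvectants, but in fact the reduction to relevant ones already yields a finite set, since there are only finitely many relevant $(\mathcal{T},\mathcal{S})_l$ with $\mathcal{T},\mathcal{S}$ monomials in fixed finite generating sets (the relevance condition bounds the exponents); so the appeal to Noetherianity there is unnecessary.
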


%***************************************************************
\subsection{ $\Proj \cR_{(d+1, d-1)}$  as a weighted projective space}
Let $\xi_0, \dots, \xi_n$ be a generating system of $\cR_{(d+1, d-1)}$. Since all $\xi_0, \dots, \xi_n$ are homogeneous polynomials, then $\cR_{(d+1, d-1)}$ is a graded ring and $\Proj \cR_{(d+1, d-1)}$ is a weighted projective space.

Let $\w := (q_0, \dots, q_n) \in \Z^{n+1}$ be a fixed tuple of positive integers called \textbf{weights}. Consider the action of $k^\times = k \setminus \{0\}$ on $\A^{n+1}(k)$ as follows:
\[
\lambda \star (x_0, \dots, x_n) = \left( \lambda^{q_0} x_0, \dots, \lambda^{q_n} x_n \right)
\]
for $\lambda \in k^\times$. The quotient of this action is called a \textbf{weighted projective space} and denoted by $\P^n_\w(k)$. It is the projective variety $\Proj(k[x_0, \dots, x_n])$ associated to the graded ring $k[x_0, \dots, x_n]$ where the variable $x_i$ has degree $q_i$ for $i = 0, \dots, n$.

We will denote a point $\p \in \P^n_\w(k)$ by $\p = [x_0 : x_1 : \dots : x_n]$.    Let $\phi(x, y) \in \Rat_d^1$ given by
$\phi(x, y) = [f_0: f_1]$.
Its associated binary forms $\I_\phi \in V_{d+1}$ and $\J_\phi \in V_{d-1}$, and $\xi_0, \xi_1, \dots, \xi_n$ the generators of the ring of invariants $\cR_{(d+1, d-1)}$.

The \textbf{invariants} of the rational function $\phi$ are defined as
\begin{equation}\label{xi}
\xi(\phi) := \left[ \xi_0(\I_\phi, \J_\phi), \xi_1(\I_\phi, \J_\phi), \dots, \xi_n(\I_\phi, \J_\phi) \right] \in \P^n_\w(k).
\end{equation}
Moreover, $\phi = \psi^\sigma$ for $\sigma \in \GL_2(k)$ if and only if $\xi(\phi) = \lambda \star \xi(\psi)$, for $\lambda = (\det \sigma)^{\frac{d}{2}}$.

Next we will determine explicitly invariants of $\cR_{d+1, d-1}$. From now on $f \in V_{d+1}$ and $g \in V_{d-1}$ where
\begin{equation}\label{f-g}
f = \sum_{i=0}^{d+1} a_i x^i y^{d+1-i} \quad \text{and} \quad g = \sum_{i=0}^{d-1} b_i x^i y^{d-1-i}.
\end{equation}

%*************************************
\subsection{Invariants of $V_4 \oplus V_2$}
To apply this framework, we specialize to $d=3$, where $\I_\phi \in V_4$ and $\J_\phi \in V_2$. We take $d=3$, $f \in V_4$ and $g \in V_2$ as in \cref{f-g}
\begin{equation}
\begin{split}
f(x, y) &= a_4 x^4+ a_3 x^3y + a_2 x^2 y^2 + a_1 x y^3 + a_0 y^4 \\
g(x, y) &= b_2 x^2 + b_1 x y + b_0 y^2
\end{split}
\end{equation}
The generators of covariants of $V_4$ and $V_2$ are
\[
T = \{ f, \, \cT = (f, f)_2, \, \cT_2 = (f, f)_4, \, \cT_3 = ((f, f)_2, f)_4 \} \quad \text{ and } \quad
S = \{ g, \, \cS_2 = (g, g)_2 \}
\]
respectively.  Hence, we are considering all transvectants
\[
\left( f^{m_1} \cT^{m_2} \cT_2^{m_3} \cT_3^{m_4}, g^{s_1} S_2^{s_2} \right)_l,
\]
for some $m_1$, $m_2$, $m_3$, $m_4$, $s_1$, $s_2$.  Since $\cT_2, \cT_3$ and $S_2$ are invariants, their exponents must be zero, otherwise we get reducible invariants.
Hence, $\cT_2, \cT_3, S_2$ are part of the generating set and further we only consider $\left( f^{m_1} \cT^{m_2}, g^{s} \right)_l$.  Then the relevant transvectants are $S_2, \cT_2, \cT_3$, and
$R_3 := (\cT, g^2)_4$, $R_4 := (f, g^2)_4$, $R_6 := (g^3, (f, \cT)_1)_6$.
Hence, the set of invariants is $\xi(\phi) = \left( \xi_0, \ldots, \xi_5 \right)$, where
\[
\xi_0 = (g, g)_2, \; \xi_1 = (f, f)_4, \; \xi_2 = (\cT, f)_4, \; \xi_3 = R_3, \; \xi_4 = R_4, \; \xi_5 = R_6
\]
with weights $(2, 2, 3, 3, 4, 6)$ respectively. Let $d=3$ and $f, g$ as in \cref{f-g}. 
We have the following expressions for invariants:
\[
\begin{split}
\xi_0 &= \frac{1}{2} \left( 4 b_0 b_2 - b_1^2 \right) \\
\xi_1 &= \frac{1}{6} \left( a_2^2 - 3 a_1 a_3 + 12 a_0 a_4 \right) \\
\xi_2 &= \frac{1}{72} \left( -2 a_2^3 + 9 (a_1 a_3 + 8 a_0 a_4) a_2 - 27 \left( a_4 a_1^2 + a_0 a_3^2 \right) \right) \\
\xi_3 &= \frac{1}{6} \left( 6 a_4 b_0^2 - 3 a_3 b_1 b_0 + 2 a_2 b_2 b_0 + a_2 b_1^2 + 6 a_0 b_2^2 - 3 a_1 b_1 b_2 \right) \\
\xi_4 &= -\frac{1}{72} \left( 2 a_2^2 b_1^2 + 4 a_2^2 b_0 b_2 - 24 a_4 a_2 b_0^2 - 24 a_0 a_2 b_2^2 - 6 a_3 a_2 b_0 b_1 \right. \\
&\left. -6 a_1 a_2 b_1 b_2 + 9 a_3^2 b_0^2 - 3 a_1 a_3 b_1^2 - 24 a_0 a_4 b_1^2 + 9 a_1^2 b_2^2 + 36 a_1 a_4 b_0 b_1 - 6 a_1 a_3 b_0 b_2 \right. \\
&\left. -48 a_0 a_4 b_0 b_2 + 36 a_0 a_3 b_1 b_2 \right) \\
\xi_5 &= -\frac{1}{32} \left( a_3^3 b_0^3 + 8 a_1 a_4^2 b_0^3 - 4 a_2 a_3 a_4 b_0^3 - a_2 a_3^2 b_1 b_0^2 -8 a_0 a_1 a_4 b_1^2 b_2 - 16 a_0 a_4^2 b_1 b_0^2 \right. \\
&\left. -2 a_1 a_3 a_4 b_1 b_0^2 + a_1 a_3^2 b_2 b_0^2 - 4 a_1 a_2 a_4 b_2 b_0^2 + 8 a_0 a_3 a_4 b_2 b_0^2 + a_1 a_3^2 b_1^2 b_0 + 4 a_2^2 a_4 b_1 b_0^2 \right. \\
&\left. -4 a_1 a_2 a_4 b_1^2 b_0 + 8 a_0 a_3 a_4 b_1^2 b_0 - a_1^2 a_3 b_2^2 b_0 + 4 a_0 a_2 a_3 b_2^2 b_0 - 8 a_0 a_1 a_4 b_2^2 b_0 \right. \\
&\left. -6 a_0 a_3^2 b_1 b_2 b_0 + 6 a_1^2 a_4 b_1 b_2 b_0 - a_0 a_3^2 b_1^3 + a_1^2 a_4 b_1^3 - a_1^3 b_2^3 + 4 a_0 a_1 a_2 b_2^3 - 8 a_0^2 a_3 b_2^3 \right. \\
&\left. -4 a_0 a_2^2 b_1 b_2^2 + a_1^2 a_2 b_1 b_2^2 + 2 a_0 a_1 a_3 b_1 b_2^2 + 16 a_0^2 a_4 b_1 b_2^2 - a_1^2 a_3 b_1^2 b_2 + 4 a_0 a_2 a_3 b_1^2 b_2 \right) \\
\end{split}
\]
The ring of invariants $\cR_{4,2}$ is generated by $\xi_0, \xi_1, \xi_2, \xi_3, \xi_4, \xi_5$ and a relation between invariants $\xi_0, \ldots, \xi_5$ is, according to \cite{west}, satisfy the following equation
\begin{small}
\begin{equation}\label{eq:Rat}
\xi_5^2 = \frac{1}{108} \xi_0^3 \xi_1^3 - 18 \xi_0^3 \xi_2^2 - \frac{1}{24} \xi_0 \xi_1^2 \xi_3^2 - \frac{1}{6} \xi_2 \xi_3^3 + \frac{1}{2} \xi_0 \xi_2 \xi_3 \xi_4 + \frac{1}{4} \xi_1 \xi_3^2 \xi_4 - \frac{1}{4} \xi_0 \xi_1 \xi_4^2 - \frac{1}{2} \xi_4^3
\end{equation}
\end{small}
Given a rational cubic
\begin{equation}\label{phi-deg3}
\phi(x) = \frac{f_0(x)}{f_1(x)} = \frac{c_0 x^3 + c_1 x^2 + c_2 x + c_3}{c_4 x^3 + c_5 x^2 + c_6 x + c_7}
\end{equation}
we   compute   its invariants in terms of its coefficients. 
%Hence, $\phi$ is represented by a point $\p = [c_0, c_1, \ldots, c_7]$ in the projective space $\P^7$. 
We have   $I_6 = \Res(f_0, f_1)$ .
%
%\iffalse
\[
\begin{split}
I_6 = & c_3^3 c_4^3 - c_0^3 c_7^3 + c_3 c_0^2 c_6^3 - c_2^3 c_7 c_4^2 + c_1^3 c_7^2 c_4 - c_3^2 c_2 c_5 c_4^2 - c_3^2 c_0 c_5^3 - 2 c_3^2 c_1 c_6 c_4^2  \\
& + c_3 c_2^2 c_6 c_4^2 + c_3^2 c_1 c_5^2 c_4 - 3 c_3^2 c_0 c_7 c_4^2 - c_1^2 c_0 c_7^2 c_5 + c_1 c_0^2 c_7^2 c_6 + 2 c_2 c_0^2 c_7^2 c_5  \\
&  + 3 c_3 c_0^2 c_7^2 c_4 - c_2^2 c_0 c_7 c_5^2 + 3 c_3^2 c_0 c_6 c_5 c_4 - 2 c_3 c_2 c_0 c_6^2 c_4 + c_3 c_2 c_0 c_6 c_5^2 \\
&  + 3 c_3 c_2 c_1 c_7 c_4^2 + 2 c_3 c_1 c_0 c_7 c_5^2 - c_3 c_1 c_0 c_6^2 c_5 - c_2 c_1^2 c_7 c_6 c_4 - 3 c_2 c_1 c_0 c_7^2 c_4  \\
&  + c_2^2 c_1 c_7 c_5 c_4 + 2 c_2^2 c_0 c_7 c_6 c_4 - c_3 c_2 c_1 c_6 c_5 c_4 - c_3 c_2 c_0 c_7 c_5 c_4 + c_3 c_1 c_0 c_7 c_6 c_4   \\
& + c_3 c_1^2 c_6^2 c_4  - c_2 c_0^2 c_7 c_6^2 + c_2 c_1 c_0 c_7 c_6 c_5  - 3 c_3 c_0^2 c_7 c_6 c_5 - 2 c_3 c_1^2 c_7 c_5 c_4 
\end{split}
\]
%\fi

The pair of binary forms $\I_\phi$ and $\J_\phi$ associated to $\phi$ are
\begin{equation}
\begin{split}
\I_\phi &= c_3 x^3 y + c_2 x^2 y^2 + c_1 x y^3 + c_0 y^4 - c_7 x^4 - c_6 x^3 y - c_5 x^2 y^2 - c_4 x y^3 \\
\J_\phi &= 3 c_3 x^2 + 2 c_2 x y + c_1 y^2 + c_6 x^2 + 2 c_5 x y + 3 c_4 y^2
\end{split}
\end{equation}
Next we evaluate the following transvectants
\begin{equation}
\begin{aligned}
\xi_0 &= \left( \J_\phi, \J_\phi \right)_2, & \xi_1 &= \left( \I_\phi, \I_\phi \right)_4, & \xi_2 &= \left( (\I_\phi, \I_\phi)_2, \I_\phi \right)_4, \\
\xi_3 &= \left( \I_\phi, \J_\phi^2 \right)_4, & \xi_4 &= \left( (\I_\phi, \I_\phi)_2, \J_\phi^2 \right)_4, & \xi_5 &= \left( \J_\phi^3, \left( \I_\phi, (\I_\phi, \I_\phi)_2 \right)_1 \right)_6
\end{aligned}
\end{equation}

\begin{rem}
The full symbolic expressions for the invariants $\xi_0,\ldots,\xi_5$ and the
auxiliary identities used in their computation follow directly from the formulas
given above.   We display them in  \cref{app-a}. 
\end{rem}

The modular resultant $\Delta_{\I_\phi, \J_\phi} = \Res(\I_\phi, \J_\phi)$ is a homogeneous polynomial of degree six in terms of $c_0, \ldots, c_7$.
Hence, we will denote it by 
$J_6 := \Res(\I_\phi, \J_\phi)$. 

Thus, there are three invariants of degree 6 in the case of cubics, namely $I_6$, $J_6$, and $\xi_5$. We would like to express $I_6$ and $J_6$ in terms of $\xi_0, \ldots, \xi_5$. The expression of $I_6$ was computed in \cite[pg. 38]{west}
\begin{equation}\label{eq:I6-old}
I_6 = \frac{1}{8} \xi_1^3 + \frac{1}{384} \xi_1 \xi_0^2 - \frac{3}{4} \xi_2^2 - \frac{3}{16} \xi_2 \xi_4 + \frac{1}{256} \xi_4^2 + \frac{3}{16} \xi_1 \xi_3 - \frac{1}{64} \xi_0 \xi_3 - \frac{1}{8} \xi_5
\end{equation}
It seems there are a couple of typos in the printed version of \cite[pg. 38]{west}, and we could not verify it directly. It can be easily noticeable that it is incorrect since it is not a homogeneous polynomial of degree 6; see for example the monomials $\xi_4^2$, $\xi_1, \xi_3$, $\xi_0 \xi_3$ which are not of degree 6.

The correct expression can be derived using  computational algebra. This involves expressing $I_6$ and $J_6$ as linear combinations of the 10 degree 6 invariants: $\xi_5$, $\xi_4 \xi_0$, $\xi_4 \xi_1$, $\xi_2^{2-j} \xi_3^j$, $\xi_0^{3-i} \xi_1^i$, for $i=0,1,2,3$ and $j=0,1,2$. We have
\[
\begin{split}
I_6 &= -\frac{1}{8} \xi_1^3 - \frac{1}{384} \xi_0^2 \xi_1 + \frac{3}{4} \xi_2^2 - \frac{3}{16} \xi_1 \xi_4 - \frac{1}{256} \xi_3^2 + \frac{3}{16} \xi_2 \xi_3 + \frac{1}{64} \xi_0 \xi_4 - \frac{1}{8} \xi_5 \\
J_6 &= \xi_3^2 - 4 \xi_4 \xi_0 + \frac{2}{3} \xi_0^2 \xi_1
\end{split}
\]
%
%------------------------- Here 
\subsection{A natural map from weighted projective space to projective space}
Let $\P(q_0,\dots,q_n)=\Proj k[x_0,\dots,x_n]$, $\deg(x_i)=q_i$, and $d=\lcm(q_0,\dots,q_n)$.   Then the monomials $x_0^{d/q_0}$, $x_1^{d/q_1}$, $\dots $, $x_n^{d/q_n}$ all have weighted degree \(d\). Hence they define a morphism $\phi_d:\P(q_0,\dots,q_n)\longrightarrow \P^n$ given by
\[
[x_0:\cdots:x_n]   \longmapsto    [x_0^{d/q_0}:\cdots:x_n^{d/q_n}].
\]
The following is a well known fact on weighted projective spaces. We include its proof due to lack of a precise reference.
\begin{lem}
The morphism $\phi_d:\P(q_0,\dots,q_n)\to\P^n $ is finite. The locus where $\phi_d$ fails to be injective is exactly the union of coordinate strata $x_{i_1}=\cdots=x_{i_r}=0$
for which $\gcd(q_j: j\notin\{i_1,\dots,i_r\})>1$.
\end{lem}

\begin{proof}
The map $\phi_d$ is induced by the $\G_m$-equivariant morphism
\[
F:\A^{n+1}\setminus\{0\}\longrightarrow \A^{n+1}\setminus\{0\},
\qquad
(x_0,\dots,x_n)\longmapsto    (x_0^{d/q_0},\dots,x_n^{d/q_n}),
\]
since each coordinate has weight $d$.
Suppose two points represent the same image under $\phi_d$. Then there exists  $t\in k^\ast$ such that
\[
x_i'^{\,d/q_i}=t\,x_i^{d/q_i}  \qquad (i=0,\dots,n).
\]
Thus $x_i'= \zeta_i\, t^{q_i/d}x_i$, where $\zeta_i$ is a $(d/q_i)$-th root of unity.   For the points to represent the same point of $\P(q_0,\dots,q_n)$ there must exist
$\lambda\in k^\ast$ such that $x_i'=\lambda^{q_i}x_i$.
Comparing the two expressions shows that the ambiguity is exactly given by the roots of unity $\mu_g$, where $g=\gcd(q_i: x_i\neq0)$.  Therefore the map is injective precisely when $g=1$, and it fails to be injective exactly along the coordinate strata where the weights of the nonzero coordinates have a common divisor greater than one.
\end{proof}

Now we turn our attention to our space $\P_{(2,2,3,3,4,6)}$.   We consider the Veronese-type morphism
\begin{equation}\label{veronese}
[\xi_0, \xi_1, \xi_2, \xi_3, \xi_4, I_6] \to [\xi_0^6, \xi_1^6, \xi_2^4, \xi_3^4, \xi_4^3, I_6^2]
\end{equation}
Since $I_6 \neq 0$, we can divide by $I_6^2$ and represent each point as
\[
\left[ \frac{\xi_0^6}{I_6^2} : \frac{\xi_1^6}{I_6^2} : \frac{\xi_2^4}{I_6^2} : \frac{\xi_3^4}{I_6^2} : \frac{\xi_4^3}{I_6^2} : 1 \right] \in \P^5.
\]
The map fails to be injective along the strata where the weights of the nonzero coordinates have a common divisor (i.e., where the weighted $\G_m$-action has nontrivial stabilizer).  This motivates the definition of the following invariants
\begin{equation}\label{absinv}
i_1 = \frac{\xi_0^6}{I_6^2}, \; i_2 = \frac{\xi_1^6}{I_6^2}, \; i_3 = \frac{\xi_2^4}{I_6^2}, \; i_4 = \frac{\xi_3^4}{I_6^2}, \; i_5 = \frac{\xi_4^3}{I_6^2},
\end{equation}
which are $\GL_2(k)$-invariants and are defined everywhere in the moduli space $\Rat_3^1$. We call such invariants $i_1, \ldots, i_5$ \textbf{absolute invariants} of $\phi(x, y)$. Hence, we have:

\begin{prop}\label{prop-1}
Two degree-three rational functions are conjugate if and only if they have the same absolute invariants,
provided that the gcd of the weights of the non-zero coordinates among $ \{\xi_0(\phi),\dots,\xi_5(\phi)\} $ equals 1.
\end{prop}

\begin{proof}
Let $\phi,\psi\in\Rat_3^1$ be given by $\phi=\frac {f_0} {f_1}$, $\psi=\frac {g_0} {g_1}$, and let   
$\xi(\phi)$   %=[\xi_0(\phi),\ldots,\xi_5(\phi)]$ 
and  
$\xi(\psi)$   %=[\xi_0(\psi),\ldots,\xi_5(\psi)]$ 
be their weighted projective  invariants as in \cref{xi}, with absolute invariants as in \cref{absinv}.
Let $S\subseteq\{0,1,2,3,4,5\}$ be the common support of the non-zero coordinates    among $\{\xi_0(\phi),\dots,\xi_5(\phi)\}$ 
%(the equality of the $i_j$ forces the  zero pattern to agree). 
By assumption $\gcd(w_i:i\in S)=1$.

If $\psi=\phi^\sigma$ for some $\sigma\in\PGL_2(k)$, then by \cref{thm-1}
we have $\I_\psi = \I_\phi^\sigma$ and $\J_\psi=\J_\phi^\sigma$, and since the
$\xi_i$ are $\SL_2(k)$--invariants,
\[
\xi_i(\psi)=\xi_i(\I_\psi,\J_\psi)=\xi_i(\I_\phi^\sigma,\J_\phi^\sigma)
=\xi_i(\phi).
\]
If $\sigma\in\GL_2(k)$ with $\det\sigma=\lambda$, then $I_6(\phi^\sigma)=\lambda^3 I_6(\phi)$ and
$\xi_i(\phi^\sigma)=\lambda^{w_i/2}\,\xi_i(\phi),$
where $(w_0,\ldots,w_5)=(2,2,3,3,4,6)$.
Hence
\[
i_j(\psi)
=\frac{\xi_{j-1}(\psi)^{12/w_{j-1}}}{I_6(\psi)^2}
=\frac{(\lambda^{w_{j-1}/2}\xi_{j-1}(\phi))^{12/w_{j-1}}}
       {(\lambda^3 I_6(\phi))^2}
=i_j(\phi).
\]
Thus conjugate maps have the same absolute invariants.

Now suppose $i_j(\phi)=i_j(\psi)$ for all $j=1,\ldots,5$.   For each $j$ we have
$\frac{\xi_{j-1}(\phi)^{12/w_{j-1}}}{I_6(\phi)^2}   =\frac{\xi_{j-1}(\psi)^{12/w_{j-1}}}{I_6(\psi)^2}$, 
hence
\begin{equation}\label{tag1}
\left(\frac{\xi_{j-1}(\phi)}{\xi_{j-1}(\psi)}\right)^{12/w_{j-1}}  =\left(\frac{I_6(\phi)}{I_6(\psi)}\right)^2.
\end{equation}
Set  $\rho_j=\frac{\xi_{j-1}(\phi)}{\xi_{j-1}(\psi)}$ and $c=\left(\frac{I_6(\phi)}{I_6(\psi)}\right)^2$.   Since $k$ is algebraically closed with  $\chara (k)=0$, choose   $\lambda\in k^\times$ with $\lambda^{12}=c$.   Then every solution of \cref{tag1} has the form
\begin{equation}\label{tag2}
\rho_j = \lambda^{w_{j-1}}\zeta_j,     \qquad  \zeta_j^{\,12/w_{j-1}} = 1,
\end{equation}
because $12/w_{j-1}$ is integral for all weights $w_{j-1}\in\{2,2,3,3,4,6\}$.     The absolute invariants $i_1,\ldots,i_5$ are weight--zero rational functions
of the weighted coordinates $\xi_0,\ldots,\xi_5$.
Every monomial occurring in their numerators and denominators has weighted    degree
\begin{equation}\label{tag3}
\sum_{i=0}^5 a_i w_i = 0.
\end{equation}
Substituting the expressions \cref{tag2} into such a monomial (with support in $S$)   yields the factor $\prod_{i\in S}\zeta_i^{a_i}.$
Since $i_j(\phi)=i_j(\psi)$ for every $j$, all such monomials must satisfy
\begin{equation}\label{tag4}
\prod_{i\in S}\zeta_i^{a_i}=1
\end{equation}
for every exponent vector $(a_i)_{i\in S}$ satisfying the degree--zero
condition \cref{tag3}.

The vectors $(a_i)_{i\in S}$ arising from the weight--zero monomials span
the kernel of the restricted weight map
\[
\Z^S \longrightarrow \Z,
\qquad
(a_i)_{i\in S}\longmapsto \sum_{i\in S} a_i w_i.
\]
Because $\gcd(w_i:i\in S)=1$, this kernel contains no nontrivial torsion.
Therefore the only solution of \cref{tag4} is
$
\zeta_i=1$ for all $i\in S$.
Hence $\rho_j=\lambda^{w_{j-1}}$ for all $j$, i.e. $\xi_{j-1}(\phi)=\lambda^{\,w_{j-1}}\,\xi_{j-1}(\psi).$
Thus $\xi(\phi)=\lambda\star\xi(\psi)$, so $\phi$ and $\psi$ represent the same
point of the weighted projective space $\P(2,2,3,3,4,6)$ and therefore
the same conjugacy class in $\PGL_2(k)$.
\end{proof}

%------
We now define the map $\Phi: \C^8 \setminus \{ I_6 = 0 \} \longrightarrow \C^5$
by
\begin{equation}\label{Phi}
(c_0,\ldots,c_7) \to \left(i_1(c_0,\ldots,c_7),\ldots,i_5(c_0,\ldots,c_7)\right),
\end{equation}
where \(i_1,\ldots,i_5\) are the absolute invariants defined in \cref{absinv}.
The condition \(I_6\neq0\) ensures that these invariants are defined everywhere on the moduli space.

To study the image \(\Img(\Phi)\), we consider the Jacobian matrix \(J(\Phi)\), a
\(5\times8\) matrix with entries
\[
\frac{\partial i_j}{\partial c_k},
\qquad j=1,\ldots,5,\; k=0,\ldots,7.
\]
Since the rank of \(J(\Phi)\) is at most \(5\), it governs the local geometry of the map~\(\Phi\).
If \(\mathrm{rank}(J(\Phi))=5\), then \(\Phi\) is a submersion at the corresponding point, and by the Implicit Function Theorem the image \(\Img(\Phi)\) is locally a smooth $5$--dimensional manifold near \(\Phi(c_0,\ldots,c_7)\).

If \(\mathrm{rank}(J(\Phi))<5\), that is, if all \(5\times5\) minors vanish, then \(\Phi\) fails to be a submersion and the image may develop singularities.
Geometrically, these points correspond to rational functions possessing nontrivial automorphisms.
Equivalently, they are precisely the loci where the map described in \cref{prop-1} fails to be injective, i.e., where the gcd condition of  \cref{prop-1} is violated.

Thus the degeneracy of the Jacobian reflects the presence of additional symmetry.
This suggests that the singular locus of \(\Img(\Phi)\),
characterized by the condition \(\mathrm{rank}(J(\Phi))<5\),
coincides with the union of the automorphism strata.
In particular, we expect it to align with the locus \(\{J_6=0\}\subset\C^8\).
While a direct computational verification of this statement is difficult, the geometric picture is clear: the points where \cref{prop-1} fails correspond exactly to maps with extra automorphisms. These loci form the automorphism strata \(\L_1,\ldots,\L_7\), whose structure and explicit parametrizations are described in the next section.

 \section{Automorphisms} 
In this section we will define and study the automorphism groups of rational functions.  Such groups have long been the focus of many authors in arithmetic dynamics; see \cite{deg-3-4, miasnikov, faria, west}.   The approach here draws on the analogy with  the automorphism groups of hyperelliptic curves; see \cite{2002-3, 2006-4}.    First we recall some preliminaries. 

Let $G$ be a finite subgroup of $\PGL_2(k)$. Therefore $G$ is isomorphic to one of the following: $C_n$, $D_n$, $A_4$, $S_4$, or $A_5$. 
%Since each is embedded in $\PGL_2(k)$, 
We can represent their generators as matrices:
%, informing the forms of $\sigma \in \Aut(\phi)$ later. Below we display all the cases:
%
\begin{small}
\begin{equation}\label{eq1}
\begin{split}
& i) \;  C_n   \cong  \left< \begin{bmatrix}   \zeta_n & 0 \\ 0 & 1 \\ \end{bmatrix} \right>,    \; 
ii) \; D_n  \cong  \left< \begin{bmatrix} 0 & 1 \\ 1 & 0 \end{bmatrix},  \begin{bmatrix} \zeta_n & 0 \\ 0 & 1 \end{bmatrix} \right>, \; 
iii) \; A_4 \cong  \left< \begin{bmatrix}  -1 & 0 \\ 0 & 1 \\ \end{bmatrix},\begin{bmatrix} 1 & \zeta_4  \\ 1 & - \zeta_4  \\ \end{bmatrix} \right>  \\
&   iv) \; \quad S_4  \cong  \left<\begin{bmatrix}   -1 & 0 \\ 0 & 1 \\ \end{bmatrix},\begin{bmatrix}   0 & -1 \\ 1 & 0 \\ \end{bmatrix},\begin{bmatrix} -1 & -1 \\ 1 &  1 \\ \end{bmatrix}\right>, \; \;
v) \;  A_5  \cong \left<\begin{bmatrix}   \omega & 1 \\ 1 & -\omega \\ \end{bmatrix},\begin{bmatrix}   \omega & \zeta_5^4 \\ 1 & - \zeta_5^4 \omega \\ \end{bmatrix}\right>  
\end{split}
\end{equation}
\end{small}
where $\omega = \frac{-1 + \sqrt{5}}{2}$, $\zeta_n$ is a primitive $n^{th}$ root of unity
%, $\zeta_5$ is a primitive $5^{th}$ root of unity, 
and $i=\sqrt{-1}$.
% is a primitive $4^{th}$ root of unity.

\begin{rem}\label{fix-lem}
In each case above, there is $\sigma \in G$ which fixes $0$ and $\infty$. The proof is elementary. In the first two cases, the Möbius transformation $\sigma(x) = \zeta_m x$ will fix $0$ and $\infty$. In the next two cases, $\sigma(x) = -x$ will do that, and in the last case, $\sigma(x) = \omega x$ is in the group and will fix $0$ and $\infty$. This property constrains $\Aut(\phi)$ later.
\end{rem}

%*****************************************************
%\subsection{Automorphisms of rational maps}
%
Let $\phi \in \Rat_d^1$. A point $[x_0 : x_1] \in \P^1$ is called a \textbf{fixed point} for $\phi$ if $\phi(x_0, x_1) = (x_0, x_1)$. Let $t = x_1 / x_0$. Hence, $\phi$ can be taken as a rational function in $t$, say 
$\phi(t) = \frac{F(t)}{G(t)}$. 
Then $t$ is a fixed point if $\phi(t) = t$, which implies that 
\[
S(t) := F(t) - t \, G(t) = 0 
\]
which is at most a degree $(d+1)$ equation in $t$. Hence, a degree $d$ rational function has at most $(d+1)$ fixed points.

We denote the set of fixed points of $\phi$ by $\Fix(\phi)$. Notice that if $\Fix(\phi) = \{w_1, \ldots, w_s\}$ with $s \leq d+1$ is known, then we can uniquely determine the rational function $\phi$ by solving the linear system $F(w_i) - w_i \, G(w_i) = 0$, for $i = 1, \ldots, s$, assuming $s = d+1$. A function $\phi$ has less than $d+1$ fixed points exactly when the discriminant $\Delta(S, t) = 0$.

An \textbf{automorphism} of $\phi$ is called a $\sigma \in \PGL_2(k)$ such that $\phi \circ \sigma = \sigma \circ \phi$. 
%
\iffalse
\[
\xymatrix{
\P_x^1 \ar[d]_\phi    \ar[r]^\sigma    &    \P_x^1 \ar[d]^\phi  \\
\P_z^1  \ar[r]_\sigma   &   \P_z^1 \\
}
\]
%
\fi
%
The set of automorphisms of $\phi$ is denoted by
\[
\Aut(\phi) := \{ \sigma \in \PGL_2(k) : \phi^\sigma = \phi \}.
\]
It forms a group. For any $\sigma \in \Aut(\phi)$, by $\Fix(\sigma)$ we denote its set of fixed points.

\begin{rem}
As in the case of curves, there is some confusion in the literature on what is called the \textbf{automorphism group} of $\phi$. Throughout this paper, by $G := \Aut(\phi)$, we mean the \textbf{full automorphism group} of $\phi(x)$ over the algebraic closure of $k$ and not simply some $G \hookrightarrow \Aut(\phi)$ as it is used frequently by many authors.
\end{rem}

For an automorphism $\sigma \in \PGL_2(k)$ of order $s$, we denote by 
$\mathcal{L}_d^{\sigma}$ the locus of degree-$d$ rational maps fixed by $\sigma$,
\[
\mathcal{L}_d^{\sigma}
   = \{\, \varphi \in \mathrm{Rat}^1_d : \varphi^\sigma = \varphi \,\}.
\]
Now we have the following:

\begin{lem} \label{fixed-pts}
Let $\phi \in \Rat_d^1$ and $\sigma \in \Aut(\phi)$. Then
\begin{enumerate}[\upshape(i), nolistsep] 
\item If $p \in \Fix(\sigma)$, then $\phi(p) \in \Fix(\sigma)$.
\item If $w \in \Fix(\phi)$, then $\sigma(w) \in \Fix(\phi)$.
\end{enumerate} 
Hence, $\Aut(\phi)$ acts on $\Fix(\phi)$ by permutation. Moreover, if $\sigma \in \Aut(\phi)$ is an automorphism of order $m$, then $m$ divides the cardinality of $\Fix(\phi) \setminus \Fix(\sigma)$. Then
$\dim \mathcal{L}_d^{\sigma} = \delta = s - 1$, where $s$ is the number of orbits on $\Fix(\phi) \setminus \Fix(\sigma)$.
\end{lem}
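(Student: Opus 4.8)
The plan is to establish the two functorial properties (i) and (ii) directly from the defining relation $\phi \circ \sigma = \sigma \circ \phi$ (equivalently $\phi^\sigma = \sigma^{-1}\circ\phi\circ\sigma = \phi$), then deduce the group action on $\Fix(\phi)$ by permutation, and finally extract the divisibility and dimension statements from the orbit structure of $\langle \sigma\rangle$ acting on a finite set.

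First I would prove (ii): suppose $w \in \Fix(\phi)$, so $\phi(w) = w$. Applying $\sigma$ and using $\sigma\circ\phi = \phi\circ\sigma$, we get $\sigma(w) = \sigma(\phi(w)) = \phi(\sigma(w))$, hence $\sigma(w)\in\Fix(\phi)$. For (i), suppose $p\in\Fix(\sigma)$, so $\sigma(p) = p$. Then $\sigma(\phi(p)) = \phi(\sigma(p)) = \phi(p)$, so $\phi(p)\in\Fix(\sigma)$. These are the only two lines of real content; everything else is bookkeeping. From (ii), each $\sigma\in\Aut(\phi)$ restricts to a bijection of the finite set $\Fix(\phi)$ (injective because $\sigma\in\PGL_2(k)$ is a bijection of $\P^1$, hence on any invariant finite set; surjective since $\sigma^{-1}\in\Aut(\phi)$ too, so $\sigma^{-1}$ also preserves $\Fix(\phi)$), and the assignment $\sigma\mapsto \sigma|_{\Fix(\phi)}$ is a homomorphism $\Aut(\phi)\to\mathrm{Sym}(\Fix(\phi))$; this is the asserted permutation action.

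For the divisibility claim, fix $\sigma\in\Aut(\phi)$ of order $m$ and consider the cyclic group $\langle\sigma\rangle \cong C_m$ acting on $\Fix(\phi)$. By (i)--(ii) this action preserves both $\Fix(\phi)$ and $\Fix(\sigma)$, hence also the complement $\Fix(\phi)\setminus\Fix(\sigma)$. The key observation is that $\langle\sigma\rangle$ acts \emph{freely} on $\Fix(\phi)\setminus\Fix(\sigma)$: if $\sigma^j(q) = q$ for some $q$ in the complement and $0 < j < m$, then $q$ is fixed by $\sigma^j$; but an element of $\PGL_2(k)$ of finite order fixing a point $q\notin\Fix(\sigma)$ together with the structure of cyclic subgroups of $\PGL_2$ forces $\sigma$ itself to fix $q$ (a nontrivial element of a cyclic subgroup of $\PGL_2(k)$ has exactly the same fixed-point set as a generator, since all powers are simultaneously diagonalizable), contradicting $q\notin\Fix(\sigma)$. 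Therefore every $\langle\sigma\rangle$-orbit in $\Fix(\phi)\setminus\Fix(\sigma)$ has size exactly $m$, so $m$ divides $\#\big(\Fix(\phi)\setminus\Fix(\sigma)\big)$, and the number of orbits is $s = \#\big(\Fix(\phi)\setminus\Fix(\sigma)\big)/m$. The dimension statement $\delta = s-1$ then follows from the standard moduli count: a rational function with the prescribed automorphism together with the prescribed fixed-point configuration is determined, up to the one-dimensional rescaling coming from the choice of coordinate normalizing $\sigma$, by the positions of the $s$ orbits, giving $\delta = s-1$ parameters.

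The main obstacle is the freeness of the $\langle\sigma\rangle$-action on $\Fix(\phi)\setminus\Fix(\sigma)$: one must argue carefully that no intermediate power $\sigma^j$ ($0<j<m$) can fix a point outside $\Fix(\sigma)$. The clean way is to invoke that any finite cyclic subgroup of $\PGL_2(k)$ is conjugate to a group of diagonal matrices, so a generator and all its nontrivial powers share the same pair of fixed points on $\P^1$; then $\Fix(\sigma^j) = \Fix(\sigma)$ for every $j$ coprime to... and more generally $\Fix(\sigma^j)\supseteq\Fix(\sigma)$ with equality unless $\sigma^j = \mathrm{id}$. I would also want to state carefully the convention that $\delta$ refers to the dimension of the locus in $\M_d^1$ of functions admitting the given $\sigma$ up to conjugacy, so that the coordinate-normalization of $\sigma$ is what accounts for subtracting $1$; otherwise the formula $\delta = s-1$ needs the caveat that it is the expected dimension when the fixed-point positions move freely.
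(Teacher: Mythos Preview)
Your proposal is correct and follows essentially the same approach as the paper: verify (i) and (ii) directly from the commutation relation $\phi\circ\sigma=\sigma\circ\phi$, then obtain the divisibility from the orbit structure of $\langle\sigma\rangle$ on $\Fix(\phi)\setminus\Fix(\sigma)$, and finally read off $\delta=s-1$ from a parameter count after normalizing $\sigma$ to fix $0,\infty$. Your treatment of freeness (via the fact that all nontrivial powers of a finite-order element of $\PGL_2(k)$ share the same two fixed points) is in fact more careful than the paper's, which simply asserts that $\langle\sigma\rangle$ ``has no fixed points'' on the complement and then says it ``acts transitively'' where ``freely'' is what is meant.
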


\proof 
For any $p \in \Fix (\phi)$, $\s (p) \in \Fix (\phi)$ since 
\[  \phi (\s (p)) = \s (\phi (p) ) = \s (p), \]
which implies that $\s (p) \in \Fix (\phi )$.    If $w\in \Fix (\phi)$ then 
\[
\s (w) = \s ( \phi (w)) = \phi ( \s (w)), 
\]
which implies that $\s (w) \in \Fix (\phi)$.

Since $\< \s \>$ has no fixed points in $\Fix (\phi)\setminus \Fix (\s)$, then it acts transitively on $\Fix (\phi)\setminus \Fix (\s)$. Hence, $|\s|$ divides its cardinality. 

Since any automorphism $\sigma \in \PGL_2(k)$ of order $s$ is conjugate to one 
fixing $0$ and $\infty$, and the relation $\varphi^\sigma=\varphi$ is preserved 
under simultaneous conjugation of $\varphi$ and $\sigma$, we may assume 
without loss of generality that $\sigma$ fixes $0$ and $\infty$.
%We have fixed 0 and $\infty$ on $\P^1_x$. 
Hence,  the dimension of the family of rational functions $\phi(x)$  is one less than the number of roots of  $F$ and $G$.  Hence, this number is exactly $s-1$. 
\qed

\begin{prop}\label{phi-dec}
Let $\s\in  \Aut (\phi)$ such that $| \s |   =   m$. Then     $H:=\< \s\>$   acts on $\phi^{-1} (0)$ and $\phi^{-1} (\infty)$. Hence,   $\phi (x)$ can be written as 
\[
\phi(x) = x \, \psi (x^m)   \quad \text{ or }  %equivalently } 
\quad  \phi(x) = \frac 1 x \, \psi (x^m),  
\]
 where $\psi (x)$ is a rational function.   Moreover,  for $G\iso A_4, S_4, A_5$ then $m= 2, 4,   5$. 
\end{prop}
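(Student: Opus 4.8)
The plan is to begin by normalizing. Since $\s\in\Aut(\phi)$ has finite order $m$ and $k$ has characteristic zero, the element $\s$ is diagonalizable in $\PGL_2(k)$, so after conjugating (which replaces $\phi$ by $\phi^M$ and $\s$ by $M^{-1}\s M$ for a suitable $M\in\PGL_2(k)$, and changes nothing in the statement) I may assume $\s(x)=\zeta x$ for a primitive $m$-th root of unity $\zeta$; thus $\Fix(\s)=\{0,\infty\}$, which is exactly the situation singled out in \cref{fix-lem}. The first claim is then immediate from $\phi\circ\s=\s\circ\phi$: if $\phi(p)=0$, then $\phi(\s(p))=\s(\phi(p))=\s(0)=0$, so $\s(p)\in\phi^{-1}(0)$, and likewise $\s$ preserves $\phi^{-1}(\infty)$; iterating gives the action of $H=\<\s\>$ on each fiber. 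The same computation shows $\phi(0),\phi(\infty)\in\Fix(\s)=\{0,\infty\}$ (compare \cref{fixed-pts}), which records whether $\phi$ fixes $0$ and $\infty$ or interchanges them.

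Next I would read off the normal form. Dehomogenizing with $x=x_1/x_0$, the relation $\phi\circ\s=\s\circ\phi$ becomes $\phi(\zeta x)=\zeta\,\phi(x)$, so the rational function $r(x):=\phi(x)/x$ satisfies $r(\zeta x)=r(x)$ and therefore lies in the fixed field $k(x)^{H}=k(x^{m})$; writing $r(x)=\psi(x^{m})$ for a rational function $\psi$ gives $\phi(x)=x\,\psi(x^{m})$. This is the form occurring when $\phi(0)=0$. In the complementary case $\phi(0)=\infty$ one runs the same argument with $r(x)$ replaced by $x\,\phi(x)$ (for $m=2$), or equivalently works with $1/\phi$ and then exchanges the two fixed points of $\s$ via $x\leftrightarrow 1/x$, arriving at the shape $\phi(x)=\frac1x\,\psi(x^{m})$ with $\psi$ regular at the origin; so exactly one of the two displayed forms holds.

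Finally, for the last assertion I would run through the three exceptional subgroups using \cref{fix-lem} together with the explicit matrices in \eqref{eq1}. The task is to select inside $G$ an element $\s$ whose two fixed points can be placed at $0$ and $\infty$ in a standard model and to record $m=|\s|$: for $G\iso S_4$ one takes a rotation of order $4$ about a coordinate axis, so $m=4$; for $G\iso A_5$ a rotation of order $5$, so $m=5$; and for $G\iso A_4$ one takes the element whose two fixed points lie in a single $A_4$-orbit, which has order $2$, so $m=2$. Applying the two steps above to this $\s$ then yields the decomposition with the claimed exponent. I expect this last step to be the main obstacle: the first two steps are just the invariant theory of a cyclic group acting on $\P^1$, whereas the specific values $2,4,5$ use the internal structure of $A_4,S_4,A_5$ — in particular one must explain why for $A_4$ the exponent is $2$ and not $3$, i.e. why the order-$3$ elements of $A_4$ (whose two fixed points lie in two distinct $A_4$-orbits) do not give the normalized decomposition used here — so the real care lies in choosing which element of $G$ to diagonalize and matching it against \eqref{eq1} and \cref{fix-lem}.
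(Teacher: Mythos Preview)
Your proof is correct and reaches the same conclusion as the paper, but by a genuinely different route. The paper argues by explicitly decomposing the fibers: after normalizing $\s(x)=\zeta_m x$, the points of $\phi^{-1}(0)$ and $\phi^{-1}(\infty)$ fall into $\<\s\>$-orbits $\{\alpha_j,\zeta_m\alpha_j,\ldots,\zeta_m^{m-1}\alpha_j\}$, so the numerator and denominator of $\phi$ factor as $\prod_j(x^m-\alpha_j^m)$ and $\prod_j(x^m-\beta_j^m)$, and one reads off $\phi(x)=x\,\F(x^m)/\G(x^m)$ (or its reciprocal). Your argument instead passes through the fixed field: from $\phi(\zeta x)=\zeta\,\phi(x)$ you get $\phi(x)/x\in k(x)^{\<\s\>}=k(x^m)$ immediately. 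Your approach is cleaner and avoids any bookkeeping about whether $0$ or $\infty$ lies in a fiber with multiplicity; the paper's explicit factorization, on the other hand, makes the degree count $r=(d-1)/m$ visible and gives the polynomial $\psi$ concretely. One small point worth tightening: your treatment of the second displayed form $\frac{1}{x}\psi(x^m)$ is slightly hazy for $m>2$ --- in fact the functional equation $\phi(\zeta x)=\zeta\phi(x)$ forces $\phi(x)=x\,\psi(x^m)$ always, and the alternative form is only a genuine dichotomy when $m=2$ (which is the case actually used downstream in \cref{sec:inv}). For the final ``moreover'' clause, the paper's proof says nothing at all beyond an implicit appeal to \cref{fix-lem}, so your discussion of which element of $A_4,S_4,A_5$ to diagonalize is already more than what appears there.
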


\proof    Let $\s \in G$ and  $t\in \Fix (\s)$.    For each $\a \in \phi^{-1} (t)$ we have 
\[
\phi (\s (\a)) = \s (\phi (\a )) = \s (t)=t.
\]
 Then $\< \s\>$ acts on the fiber $\phi^{-1} (t)$.    From \cref{fix-lem}   there is $\s \in G$ which fixes $0$ and $\infty$. Then $\<\s\>$ acts on $\phi^{-1} (0)$ and $\phi^{-1} (\infty)$.  Then points in $\phi^{-1} (0)$ and $\phi^{-1} (\infty)$      are 
\[
\begin{split}
&  \a_1, \xi \a_1, \ldots , \xi^{m-1} \a_1,  \;         \a_2, \xi \a_2, \ldots , \xi^{m-1} \a_2, \;     \ldots , \a_r, \xi \a_r, \ldots , \xi^{m-1} \a_r, \\
&  \b_1, \xi \b_1, \ldots , \xi^{m-1} \b_1,  \;         \b_2, \xi \b_2, \ldots , \xi^{m-1} \b_2, \;     \ldots , \b_r, \xi \b_r, \ldots , \xi^{m-1} \b_r, \\
\end{split}
\]
where $r=\frac {d-1} m$ and $\a_1, \ldots , \a_r, \b_1, \ldots , \b_r  \in k\setminus \{ 0,1,  \infty \}$. Therefore, 
\[
\begin{split}
\F (x) &=   \prod_{j=1}^r        \prod_{i=0}^{m-1} \left(   x - \xi_m^i \a_j \right)     =   \prod_{j=1}^r  (x^m -\a_j^m), \\
\G (x) &=    \prod_{j=1}^r        \prod_{i=0}^{m-1} \left(   x - \xi_m^i \b_j \right)     =    \prod_{j=1}^r  (x^m -\b_j^m) 
\end{split}
\]
Thus, $\phi (x)$  can be written as 
\[
\phi (x) =   x \,  \frac {\F(x^m)}  {\G(x^m)}    =
x \, \frac {  x^{rm} + a_{r-1} x^{(r-1)m}  + \cdots a_1 x^m + a_0}     {  x^{rm} + b_{r-1} x^{(r-1)m}  + \cdots b_1 x^m + b_0}  
\]
or $\phi (x) =    \frac {\G(x^m)}   {x \, \F(x^m)} $. 
This completes the proof.
 \qed

The above Lemmas give us a way to determine $\phi (x)$ once an automorphism $\sigma \in \Aut (\phi)$ is given.  
%Since the goal of this paper is to investigate computational and machine learning techniques for $d=3$ for the rest of the paper we focus solely on this case. 

%**********
%\newpage
\subsection{Automorphism Groups of Rational Cubics}

The automorphism groups for cubic rational functions and their parametric families were determined in \cite{deg-3-4} and in \cite{west}. Here we give a brief treatment to sort out some mistakes and furthermore determine explicitly  families in terms of invariants $\xi_0, \ldots , \xi_5$ and in each case give a formula for the rational cubic defined over its field of moduli. 

\begin{lem} \label{maxorder}
Let $\phi \in \Rat_d^1$ with $d = 3$. Then the following hold: 
\begin{enumerate}[\upshape(i), nolistsep] 
\item Elements of $\Aut(\phi)$ have orders at most 4.
\item $\Aut(\phi)$ is isomorphic to one of the following:  $\{e\}, C_2, C_3, C_4, V_4, D_4$, or $A_4$.
\end{enumerate} 
\end{lem}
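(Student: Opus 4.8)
The plan is to establish (i) first, by a direct analysis of the commutation relation, and then to deduce (ii) from the classification of finite subgroups of $\PGL_2(k)$ recalled in \cref{sec:intro}. Throughout I will use freely that conjugating $\phi$ changes neither the isomorphism type of $\Aut(\phi)$ nor the orders of its elements, and that every finite order element of $\PGL_2(k)$ is diagonalizable.

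For (i), suppose $\sigma\in\Aut(\phi)$ has order $m$. Conjugating $\phi$, I may assume $\sigma$ is the Möbius map $x\mapsto\zeta_m x$ with $\zeta_m$ a primitive $m$-th root of unity. Write $\phi=[f_0:f_1]$ with $f_0=\sum_{k=0}^{3}a_k x^k y^{3-k}$, $f_1=\sum_{k=0}^{3}b_k x^k y^{3-k}$ and $\Res(f_0,f_1)\ne0$. By \cref{phi-M}, $\phi^\sigma=\phi$ amounts to the equality of reduced fractions $f_0(\zeta_m x,y)/\bigl(\zeta_m f_1(\zeta_m x,y)\bigr)=f_0(x,y)/f_1(x,y)$, so there is a scalar $\mu$ with $a_k\zeta_m^{k}=\mu a_k$ and $b_k\zeta_m^{k+1}=\mu b_k$ for all $k$; hence the exponents occurring in $f_0$ lie in a single residue class modulo $m$, and likewise those in $f_1$, the two classes differing by $1$. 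If $m\ge5$, each residue class modulo $m$ contains at most one of $0,1,2,3$, so $f_0=a_{k_0}x^{k_0}y^{3-k_0}$ and $f_1=b_{k_1}x^{k_1}y^{3-k_1}$ with $k_0\equiv k_1+1\pmod m$, which forces $k_0=k_1+1$ and $k_1\le2$; but then $x^{k_1}y^{2-k_1}$ divides both $f_0$ and $f_1$, contradicting $\Res(f_0,f_1)\ne0$. So $m\le4$, proving (i). As a by-product, the same computation shows that up to conjugacy the only cubic carrying an automorphism of order $4$ is of the form $\phi(x)=c/x^3$, and the only cubics carrying an automorphism of order $3$ are $\phi(x)=\frac{a x}{x^3+b}$ and $\phi(x)=\frac{x^3+a}{b x^2}$ with $ab\ne0$ (the remaining branch of the argument always produces a non-coprime pair, hence no genuine cubic).

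For (ii), $\Aut(\phi)$ is a finite subgroup of $\PGL_2(k)$, hence isomorphic to $C_n$, $D_n$, $A_4$, $S_4$, or $A_5$. By (i) its elements have order $\le4$, so $A_5$ is excluded and $n\le4$ for $C_n$ and $D_n$; among the survivors, only $D_3\cong S_3$ and $S_4$ are not on the asserted list. Since both of these contain a subgroup isomorphic to $S_3$, it suffices to show $S_3\not\le\Aut(\phi)$. Suppose it did, and choose $\rho\in\Aut(\phi)$ of order $3$ and $\tau\in\Aut(\phi)$ with $\tau\rho\tau^{-1}=\rho^{-1}$. Conjugating, take $\rho\colon x\mapsto\zeta_3 x$, so by the by-product above $\phi$ equals $\frac{a x}{x^3+b}$ or $\frac{x^3+a}{b x^2}$ with $ab\ne0$. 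Now $\tau$ normalizes $\langle\rho\rangle$ without centralizing it, and the normalizer of $\langle x\mapsto\zeta_3 x\rangle$ in $\PGL_2(k)$ consists of the maps $x\mapsto\alpha x$ (the centralizer) together with the maps $x\mapsto\beta/x$; hence $\tau\colon x\mapsto\beta/x$ for some $\beta\ne0$. A direct substitution via \cref{phi-M} gives $\phi^\tau=\frac{b x^3+\beta^3}{a x^2}$ in the first case and $\phi^\tau=\frac{b\beta^3 x}{a x^3+\beta^3}$ in the second, so $\phi^\tau=\phi$ would force $b=0$, respectively $a=0$, contradicting $ab\ne0$. Therefore $\Aut(\phi)\in\{e,C_2,C_3,C_4,V_4,D_4,A_4\}$, as claimed; one could alternatively run the argument through the action of $\Aut(\phi)$ on the $2d-2=4$ critical points of $\phi$, whose $\PGL_2$-stabilizer is always $V_4$, $D_4$, or $A_4$.

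The genuinely delicate point is the rigidity behind (i): pinning down exactly which cubics admit a cyclic automorphism of each order, with the coprimality and degree bookkeeping kept straight --- this is precisely the place where earlier treatments contain small slips. Everything else reduces to the classical subgroup list and one-line substitutions. I also note that $C_4$ appears on the list only because $C_4\le D_4$: the only cubic with a $C_4$-action is $c/x^3$, whose automorphism group is in fact $D_4$; since the statement only asserts membership in the list, this causes no difficulty.
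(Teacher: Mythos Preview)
Your proof is correct. Part (i) follows the same diagonalize-and-analyze-monomials approach as the paper, though you carry out the coprimality bookkeeping more carefully than the paper's one-line ``$|r_0-r_1-1|\le4$ so $n\le4$''.

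Part (ii) is where you genuinely diverge. The paper's proof of (ii) is not self-contained: it simply lists the candidate groups and says ``We exclude $S_4$ as shown in the rest of this section,'' deferring to the subsequent case-by-case construction of the loci $\L_1,\ldots,\L_7$ (and it does not explicitly address $D_3\cong S_3$ at all). You instead give a direct argument: classify the cubics admitting an order-$3$ automorphism, compute the normalizer of $\langle x\mapsto\zeta_3 x\rangle$, and check by substitution that the required inverting involution $x\mapsto\beta/x$ cannot be an automorphism. This is cleaner and makes the lemma stand on its own, at the cost of the small extra computation; the paper's route has the advantage that the later locus analysis is needed anyway, so nothing is wasted.

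One small caveat on your closing parenthetical: the alternative via the action on the $2d-2=4$ critical points is not quite as clean as you suggest, since those critical points need not be distinct (for $\phi(x)=1/x^{3}$ the ramification is concentrated at $0$ and $\infty$), so the ``stabilizer of four points'' picture requires extra care. This does not affect your main argument.
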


\begin{proof}
Suppose that $\sigma \in \Aut(\phi)$ is of order $n$.  
There is no loss of generality in assuming 
\[
\sigma = 
\begin{bmatrix} 
1 & 0 \\ 
0 & \zeta_n 
\end{bmatrix},
\qquad 
\zeta_n \text{ a primitive $n$-th root of unity}.
\]
Then $\phi^\sigma = \phi$ if and only if $\phi$ commutes with~$\sigma$.
Since $\varphi = F/G$ has degree $3$, the monomials 
$x^{r_0}y^{3-r_0}$ in $F$ and $x^{r_1}y^{3-r_1}$ in $G$ must transform by the 
same character under $\sigma(x)=\zeta_n x$.  
This forces 
\[
\zeta_n^{\,r_0 - r_1 - 1} = 1,
\qquad\text{i.e. } n \mid (r_0 - r_1 - 1).
\]
Thus only those pairs of monomials with $n \mid (r_0 - r_1 - 1)$ can appear
simultaneously in $(F,G)$.

As a result, we only need to characterize cubic rational functions that have an automorphism of order 2, 3, or 4 in terms of their invariants (recall that if $\sigma \in \Aut(\phi)$ has order 4, then $\sigma^2 \in \Aut(\phi)$ has order 2). From (i), we eliminate $C_n$ and $D_{2n}$ for $n \geq 5$, as well as $A_5$, from consideration. Thus, possible groups are $\{1\}$, $C_2$, $C_3$, $C_4$, $V_4$, $D_4$, and $A_4$, as evidenced by the families below. We exclude $S_4$ as shown in the rest of this section. 
\end{proof}

Assume $\phi \in \Rat_3^1$ as in \cref{phi-deg3}.   Let $W := \Fix(\phi)$ be the set of fixed points, and $\sigma \in \Aut(\phi)$ is a non-trivial automorphism of order $n$. By \cref{maxorder} we have  $n = 2, 3,$ or 4.   The case of $n=4$ is a subcase of $n=2$.  Hence, there are two main cases $n=2$ and $n=3$.

%**********************************************
\subsection{Involutions}\label{sec:inv}
Let $\sigma \in \Aut (\phi)$ such that $|\sigma| = 2$. We can pick a coordinate in $\P^1$ as in \cref{eq1}  such that 
$
\sigma(z) = -z.
$
 Then $\Fix(\sigma) = \{0, \infty\}$. From \cref{fixed-pts}, we have that $\phi(0), \phi(\infty) \in \Fix(\sigma)$. Hence, there are two cases:
 $\phi$ fixes points of $\Fix(\sigma)$  or   $\phi$ permutes points of $\Fix(\sigma)$  and each of them corresponds to an irreducible surface as we will see next.

%********************************************
\subsubsection{$\phi$ fixes points of $\Fix(\sigma) = \{0, \infty\}$}
In this case, \(\phi(z)\) fixes both points in \(\Fix(\sigma)\), so \(\phi(0) = 0\) and \(\phi(\infty) = \infty\). By \cref{phi-dec}, \(\phi(z) = z \psi(z^2)\), where \(\psi(z^2)\) is a rational function ensuring \(\phi\) is degree 3 with \(\sigma(z) = -z\) as an automorphism. Hence, 
\[
\phi(z) = z \psi(z^2) = \frac{z (z^2 + a)}{z^2 + b},
\]
for \(a, b \in k\).  Assuming \(b \neq 0\) (to avoid a pole at \(z = 0\)), rewrite $\phi(z)$ as 
\[
\phi(z) = \frac{1}{b} \cdot \frac{z (z^2 + a)}{\frac{1}{b} z^2 + 1} = \frac{1}{b} \cdot \frac{z^3 + a z}{\frac{1}{b} z^2 + 1}.
\]
Define \(t = a\) and \(s = \frac{1}{b}\) and we have 
\[
\phi(z) = \frac{1}{b} \cdot \frac{z^3 + t z}{s z^2 + 1}.
\]
Since \(\phi = \frac{F(z)}{G(z)}\) is defined up to a scalar multiple, we can scale by \(b\), yielding:
\[
\phi(z) = z \frac{z^2 + t}{s z^2 + 1},
\]
for \(t, s \in k\), with \(s \neq 0\). This matches the family in \cite{deg-3-4}. 
Then   
\[
I_6(\phi) = t^2 s^2 - 2 t s + 1 = (t s - 1)^2 \neq 0.
\]
Hence,    in this case   $\phi$ is conjugate to a rational function written in the form 
\begin{equation}\label{phi:L1}
\boxed{\phi(z) = \frac{z^3 + t z}{s z^2 + 1}, \qquad \p = [1 : 0 : t : 0 : 0 : s : 0 : 1]}
\end{equation}
for some $t, s \in k$ such that $t s \neq 1$.    
%

%*******************************************************************************
\subsubsection{$\phi$ permutes points of $\Fix(\sigma)$}
We are still under assumption that there is an involution $\sigma \in \Aut (\phi)$ such that $\sigma (z)= - z$. Assume that $\phi$ permutes $0$ and $\infty$. 
 From \cref{phi-dec}, we have that $\phi(z) = \frac{1}{z} \psi(z^2)$, for some degree two rational function $\psi(z)$.   
  \cref{fixed-pts}  implies that   $\s$ permutes points in  $\phi^{-1} (0)$ and $\phi^{-1} (\infty)$ which are the numerator and denominator of $\psi$.  
 Thus 
\[
\phi(z) = \frac{1}{z} \, \frac{z^2 + b}{z^2 + a}.
\]
Similarly as the previous case, we can normalize $b=1$ and get  
\begin{equation}\label{phi:L2}
\boxed{\phi(z) = \frac{s z^2 + 1}{z^3 + t z}, \qquad \p = [0 : s : 0 : 1 : 1 : 0 : t : 0]}
\end{equation} 
for some $t, s \in k$. The resultant in this case is
\[
I_6(\phi) = - (t s - 1)^2 \neq 0
\]
which implies that $ts\neq 1$. 

\begin{rem}
Notice that $\phi(z)$ in \cref{phi:L2}   is the reciprocal of the function $\phi$ given in \cref{phi:L1}.

\end{rem}
%********************************************
\subsection{Extra involutions}
Assume that there is another involution $\tau \in \Aut(\phi)$.  Since $\s$ fixes 0 and $\infty$, then $\tau$ must permute. Hence $\tau (z) = \frac 1 z$. 
Thus we have the Klein 4-group $V_4$ embedded in the automorphism group  $\Aut(\phi)$.

\begin{prop}\label{prop:V4}
Let \(\phi(z)\) be a cubic rational function with an involution, written as
\[
\phi(z) = \frac{a z^2 + b}{z (c z^2 + d)},
\]
where \(a, b, c, d \in k\). If \(\phi(z)\) has another involution, then \(\phi(z)\) is conjugate under \(\PGL(2, k)\) to one of:
\[
\phi(z) = \frac{t z^2 + 1}{z^3 + t z} \quad \text{or} \quad \phi(z) = \frac{s z^2 - 1}{z^3 - s z},
\]
for some \(t, s \in k\).
\end{prop}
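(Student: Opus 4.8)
The plan is to normalize both involutions by conjugation and then read off the family from a short coefficient comparison. Since $k$ is algebraically closed and $\Aut(\phi)$ is finite, the extra involution together with $\sigma$ generates a finite dihedral group inside $\Aut(\phi)$; passing, if necessary, to the central involution of a $D_4$, one obtains a Klein four-group $V_4 \subseteq \Aut(\phi)$ containing $\sigma$. The three non-identity elements of a $V_4 \subset \PGL_2(k)$ have pairwise disjoint fixed-point sets, and each swaps the two fixed points of the others; since $\sigma$ is conjugate to $z \mapsto -z$ and hence fixes a pair $\{0,\infty\}$, its partner $\tau$ swaps $0$ and $\infty$, so $\tau(z) = e/z$ for some $e \in k^\star$. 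Conjugating by $z \mapsto \mu z$ with $\mu^2 = e$ replaces $e$ by $1$, fixes $\sigma$, and only rescales the coefficients $a,b,c,d$, so after this reduction we have $\sigma(z) = -z$, $\tau(z) = 1/z$, and $\phi(z) = \frac{az^2+b}{z(cz^2+d)}$ (the shape already forces $\phi(-z) = -\phi(z)$, i.e. $\sigma \in \Aut(\phi)$).

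Next I would impose $\phi \circ \tau = \tau \circ \phi$. One computes $\phi(1/z) = \frac{bz^3+az}{dz^2+c}$ and $1/\phi(z) = \frac{cz^3+dz}{az^2+b}$; cross-multiplying produces a polynomial identity in which the coefficients of $z^5$ and $z$ both read $ab = cd$ and the coefficient of $z^3$ reads $a^2+b^2 = c^2+d^2$. Because $\deg\phi = 3$ the denominator must have degree $3$, so $c \neq 0$; rescaling the representative pair $(F,G)$ we take $c = 1$, whence $d = ab$, and substituting into the second relation gives $a^2+b^2 = 1 + a^2 b^2$, which factors as $(a^2-1)(b^2-1) = 0$.

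Finally I would split into cases. If $a^2 = 1$ then the numerator $\pm z^2 + b$ and the denominator $z(z^2 \pm b)$ share the factor $z^2 \pm b$, so $\phi$ reduces to $\pm 1/z$, contradicting $\deg\phi = 3$; hence $b^2 = 1$. If $b = 1$ then $d = a$ and $\phi(z) = \frac{az^2+1}{z^3+az}$, which is the first family with $t = a$; if $b = -1$ then $d = -a$ and $\phi(z) = \frac{az^2-1}{z^3-az}$, the second family with $s = a$. Since all the reductions used were $\PGL_2(k)$-conjugations or harmless rescalings of the representative, this shows that the original $\phi$ is $\PGL_2(k)$-conjugate to a member of one of the two stated families.

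The computations here are routine; the parts that require genuine care are the normalization of the second involution — in particular justifying that one may take $\tau(z) = 1/z$ using finiteness of $\Aut(\phi)$ and the fixed-point geometry of $V_4$ in $\PGL_2(k)$, while keeping $\sigma(z) = -z$ and the given form of $\phi$ — and the degeneration step, where one must exhibit the common factor explicitly to conclude that $a^2 = 1$ is incompatible with $\phi$ being a genuine cubic. I do not expect any deeper obstruction beyond this bookkeeping.
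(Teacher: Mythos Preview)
Your proposal is correct and follows essentially the same route as the paper: normalize the second involution to $\tau(z)=1/z$, impose $\phi\circ\tau=\tau\circ\phi$ to obtain $ab=cd$ and $a^2+b^2=c^2+d^2$, and solve. You are in fact more careful than the paper on two points—justifying the reduction to $\tau(z)=1/z$ via the $V_4$ fixed-point geometry and an explicit scaling, and explicitly ruling out the degenerate branch $a^2=c^2$ (your $a^2=1$ after normalizing $c=1$), which the paper silently drops when it lists only $(b,d)=(c,a)$ or $(-c,-a)$.
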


\begin{proof}
%Assume \(\phi(z) = \frac{a z^2 + b}{z (c z^2 + d)}\) is a degree 3 rational function and  \(\sigma(z) = -z\) and \(\tau(z) = \frac{1}{z}\) are involutions in \(\Aut(\phi)\).
%
Let us see what condition on coefficients $a, b, c, d$ are enforced by \(\tau(z) = \frac{1}{z}\).   
\[
\phi(\tau(z)) = \phi\left( \frac{1}{z} \right) = \frac{a \left( \frac{1}{z} \right)^2 + b}{\frac{1}{z} \left( c \left( \frac{1}{z} \right)^2 + d \right)} = \frac{\frac{a}{z^2} + b}{\frac{c}{z^2} + \frac{d}{z}} = \frac{\frac{a + b z^2}{z^2}}{\frac{c + d z^2}{z^3}} = \frac{z (a + b z^2)}{c + d z^2},
\]
\[
\tau(\phi(z)) = \frac{1}{\frac{a z^2 + b}{z (c z^2 + d)}} = \frac{z (c z^2 + d)}{a z^2 + b}.
\]
Then $\phi(\tau(z)) = \tau(\phi(z)) $ implies 
%\[\frac{z (a + b z^2)}{c + d z^2} = \frac{z (c z^2 + d)}{a z^2 + b},\]
$
\frac{a + b z^2}{c + d z^2} = \frac{c z^2 + d}{a z^2 + b},
$
which implies 
%\[a^2 z^2 + a b + a b z^4 + b^2 z^2 = c^2 z^2 + c d + c d z^4 + d^2 z^2,\]
\[
a b z^4 + (a^2 + b^2) z^2 + a b = c d z^4 + (c^2 + d^2) z^2 + c d.
\]
Hence we must have  \(a b = c d\) and   \(a^2 + b^2 = c^2 + d^2\).
We solve this system of equations  for \(b, d\) in terms of \(a, c\).  Recall that $c\neq 0$, otherwise $\deg \phi < 3$.  Then we have $(b, d) = (c, a)$ or $(b, d) = ( - c,  - a)$.  
If $(b, d) = (c, a)$ we have 
$
\phi(z) = \frac {az^2 +c} {cz^3 + az} = \frac{t z^2 + 1}{z^3 + t z},
$
for $t=a/c$.   
If $(b, d) = ( - c,  - a)$ we have 
$
\phi(z) = \frac {az^2 -c} {cz^3 - az} = \frac{t z^2 - 1}{z^3 - t z},
$
for $t=a/c$.   
Thus, \(\phi(z)\) is conjugate to \(\frac{t z^2 + 1}{z^3 + t z}\) or \(\frac{s z^2 - 1}{z^3 - s z}\).
\end{proof}

\subsubsection{First case} 
Let us denote the locus of all $\phi(z)$ is in the first case of the \cref{prop:V4} by $\L_4$.  
Notice that if $\phi \in \L_2$  this case    implies that $t=s$.   
Thus, 
\begin{equation}\label{phi:L4}
\boxed{\phi(z) = \frac{t z^2 + 1}{z^3 + t z}, \quad \p = [0, t, 0, 1, 1, 0, t, 0]}
\end{equation}
Similarly one can show that  if  $\phi \in \L_1$ then we still get $t=s$, which is a confirmation that this locus will be  the intersection $\L_1 \cap \L_2$.  We will denote it by $\L_4$. 

We denote by  $u:=t^2$.
Then $I_6$ invariant is 
\[
I_6(\phi) =- (t^2-1)=   - (u - 1)^2 \neq 0.
\]

%*********** L_5

\subsubsection{Second case}   
Let us denote the locus of all $\phi(z)$ is in the first case of the Proposition by $\L_4$.  
Notice that if $\phi \in \L_2$  this case    implies that $t=-s$. 
\begin{equation}\label{phi:L5}
\boxed{\phi(z) = \frac{s z^2 - 1}{z^3 - s z}, \quad \p = [0, s, 0, -1, 1, 0, -s, 0]}
\end{equation}
The resultant here is $I_6(\phi) = -s (s^2 - 1)^2 \neq 0$ for $s \neq 0, \pm 1$. 

%*******************************
\subsubsection{Alternating group: $\Aut(\phi) \cong A_4$}

Suppose a Klein four subgroup $V_4 \subset \Aut(\phi)$ extends to an
$A_4$--subgroup.  From \cref{eq1}, we may take 
$\sigma(z) = -z$ and $\tau(z) = \frac{z + i}{z - i}$, which generate a copy of
$A_4$ inside $\PGL_2(k)$.  
In this case the corresponding rational map is
\begin{equation}\label{phi:L6}
\boxed{\phi(z) = \frac{z^3 - 3}{-3 z^2}}, 
\qquad 
\mathbf{p} = [1 : 0 : 0 : -3 : 0 : -3 : 0 : 0].
\end{equation}

Since the $A_4$--action on $\P^1$ is unique up to conjugacy, and the
space of degree--$3$ relative invariants for this action is one--dimensional,
the resulting cubic map is unique up to conjugacy.  Hence the locus of maps
with automorphism group $A_4$ consists of a single point in $\cM_3^1$.

%******************************************************
\subsubsection{Dihedral group $D_4$}
Here, we can assume that   $\s$ is an automorphism of the form $\s(z) = \zeta_4 z$.   where $\zeta_4$ is a fixed primitive $4$th root of unity in $k$, is an automorphism. 

Since $\s^2$ in $\Aut(\phi)$ acts as $\s(z) = -z$, a situation we discussed earlier, we can begin with a rational function of the form in $\L_1$ or $\L_2$, which corresponds to  
\[
 \p=[1: 0 : t: 0:  0 : s: 0: 1      ]    \quad    \text{ or }          \p=[0: t: 0: 1:  1 : 0: s:0 ]
\]
Verifying that $\s\in\Aut(\phi)$, we can confirm that the first  case does not yield any possible rational cubic functions, % try the points (1:\pm1) and (1:2)
while in the second case, it must be the case that $t=s=0$. Hence 
\begin{equation}\label{phi:L7}
\boxed{
\phi=\frac{1}{z^3}, \qquad \p=[0,0,0,1,1, 0, 0, 0]
}
\end{equation}
%

%**************************************************************
\subsection{An automorphism of order 3}
Let $\sigma$ be of order 3 taken as in \cref{eq1}, so $\sigma(z) = \zeta_3 z$. Then, by \cref{phi-dec}, we have $\langle \sigma \rangle$ acts on the fiber $\phi^{-1}(0)$, which implies that the numerator of $\phi(z)$ is a polynomial $p(z) = z^3 - t$, for some $t \in k^\times$. We can pick $0$ and $\infty$ in $\phi^{-1}(\infty)$. Since $\phi^{-1}(\infty)$ is an orbit of $\langle \sigma \rangle$ and $\sigma$ fixes them, then one of them must have multiplicity 2. Thus, we can take 
\begin{equation}\label{phi:L3}
\boxed{\phi(z) = \frac{z^3 - t}{z}, \quad \p = [1 : 0 : 0 : -t : 0 : 0 : 1 : 0]}
\end{equation}
for some $t \in k$. The resultant is $I_6(\phi) = -t^3$, which gives the condition that $t \neq 0$.
%

%***********************************************************************
\begin{table}[h!]
\renewcommand{\arraystretch}{2}
\begin{center}
\begin{tabular}{|c|c|c|c|c|c|c|}
\hline  
 & $G$ & $\phi(z)$ & $\p \in \P^7$ & dim & Eq. $\L_i$ & $\xi (\phi)$ \\
\hline \hline 
$\L_1$ & $C_2$ & $\frac{z^3 + t z}{s z^2 + 1}$ & $[1, 0, t, 0, 0, s, 0, 1]$ & 2 & \eqref{eq:L1} & $\xi_2=\xi_3=0$ \\
$\L_2$ & $C_2$ & $\frac{s z^2 + 1}{z^3 + t z}$  & $[0, s, 0, 1, 1, 0, t, 0]$ & 2 & \eqref{eq:L2} & $\xi_5=0$ \\  
$\L_3$ & $C_3$ & $\frac{z^3 - t}{z}$  & $[1, 0, 0, -t, 0, 0, 1, 0]$ & 1 & \eqref{eq:L3} & \\
$\L_4$ & $V_4$ & $\frac{t z^2 + 1}{z^3 + t z}$  & $[0, t, 0, 1, 1, 0, t, 0]$ & 1 & \eqref{eq:L4} & $\xi_2=\xi_3=\xi_5=0$ \\
$\L_5$ & $V_4$ & $\frac{t z^2 - 1}{z^3 - t z}$  & $[0, t, 0, -1, 1, 0, -t, 0]$ & 1 & \eqref{eq:L5} & $\xi_0=\xi_3=\xi_4=\xi_5=0$ \\
$\L_6$ & $A_4$ & $\frac{z^3 - 3}{-3 z^2}$  & $[1, 0, 0, -3, 0, -3, 0, 0]$ & 0 & \eqref{eq:L6} & $[0:0:18:0:0:0]$ \\
$\L_7$ & $D_4$ & $\frac{1}{z^3}$  & $[0, 0, 0, 1, 1, 0, 0, 0]$ & 1 & \eqref{eq:L7} & $[0:-2:0:0:0:0]$ \\
\hline
\end{tabular}
\end{center}
\caption{Automorphism loci of degree 3 rational functions}
\label{tab:deg3}
\end{table}

This completes all the cases.    We summarize all cases  in  \cref{tab:deg3}. 
These results  agree with results in  \cite{deg-3-4}. 
Justifying the last two columns of the table will be the focus of the next section.

Inclusions among the loci are described in diagram \cref{diag}.  Notice that in each node of the diagram \cref{diag} we also put the invariants which vanish in that locus.  Justifying these inclusions , computing the invariants, and describing each locus in terms of these invariants will be done in the next section.

\begin{small}
\begin{figure}[h!]
\centering
\begin{tikzpicture}[scale=.4, thick,
boxY/.style={rectangle,draw=black!50,fill=yellow!20},
boxR/.style={draw=black!50,fill=red!60},
boxG/.style={draw=black!50,fill=green!60},
boxYY/.style={draw=black!50,fill=yellow!60}
]

% level labels
\node[boxY] at (-4,0)  {0};
\node[boxY] at (-4,4)  {1};
\node[boxY] at (-4,8)  {2};
\node[boxY] at (-4,14) {4};

% nodes
\node (node0) at (10,14) [boxR] {$\{e\}:\;\M_3^1$};

\node (node1) at (2,8)  [boxG] {\shortstack{$C_2:\;\L_1$\\ $\xi_2,\xi_3$}};
\node (node2) at (10,8) [boxG] {\shortstack{$C_2:\;\L_2$\\ $\xi_5$}};

\node (node3) at (20,4) [boxYY] {$C_3:\;\L_3$};

\node (node4) at (4,4)  [boxG] 
{\shortstack{$V_4:\;\L_4$\\ $\xi_2,\xi_3,\xi_5$}};

\node (node5) at (12,4) [boxG] 
{\shortstack{$V_4:\;\L_5$\\ $\xi_0,\xi_3,\xi_4,\xi_5$}};

\node (node6) at (16,0) [boxYY] 
{\shortstack{$A_4:\;\L_6$\\ $\xi_2\neq0$}};

\node (node7) at (8,0)  [boxG] 
{\shortstack{$D_4:\;\L_7$\\ $\xi_1\neq0$}};

% edges
\draw[-,blue!80] (node1.south) -- (node4.north);
\draw[->,blue!80] (node2.south) -- (node4.north);
\draw[-,blue!80] (node2.south) -- (node5.north);

\draw[-,blue!80] (node4.south) -- (node7.north);
\draw[-,blue!80] (node5.south) -- (node6.north);
\draw[-,blue!80] (node5.south) -- (node7.north);
\draw[-,blue!80] (node3.south) -- (node6.north);

\draw[-,blue!80] (node0.south) -- (node1.north);
\draw[-,blue!80] (node0.south) -- (node2.north);
\draw[-,blue!80] (node0.south) -- (node3.north);

\end{tikzpicture}
\caption{The inclusions among the loci for degree 3 rational functions}
\label{diag}
\end{figure}
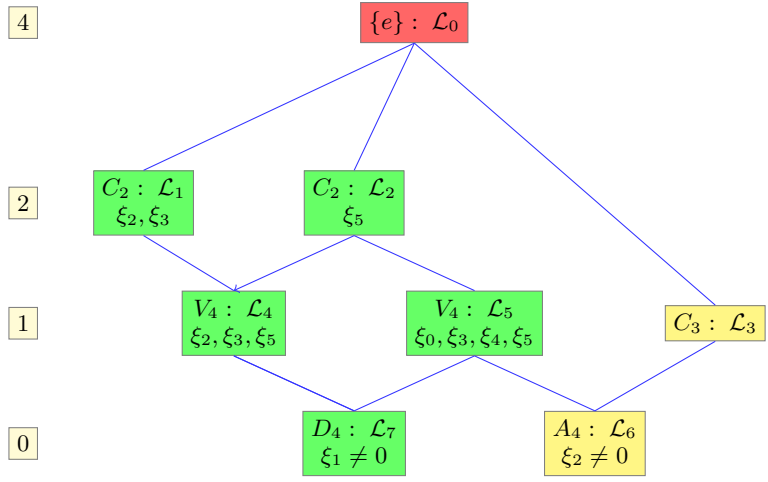
\end{small}

%**********
\begin{rem}
Those familiar with automorphisms groups of hyperelliptic curves notice the similarity between the two problems.  The zeroes and poles of $\phi (z)$ now play the role of the Weierstrass points of the hyperelliptic curve. While the reduced automorphism group of a hyperelliptic curve acts on the set of Weierstrass points, in our case here $\Aut (\phi)$ acts on the set of zeroes of $\phi (z)$ and on the set of poles. A general treatment of automorphism group of $\phi(z)$ can be done similarly to that of hyperelliptic curves; see \cite{2002-3}. 
\end{rem}

%*************
%*****************************************************
\section{Computation of the Loci with Fixed Automorphism Group}
In this section, we compute the loci of rational cubics with a fixed automorphism group in terms of invariants of rational cubics. We follow an analogy with algebraic curves and build on previous work by the third author in computing similar loci in the moduli space of genus 2 curves; see  
\cite{2000-1, 2002-3, 2003-3, 2006-4}. 
%and continuing with several other papers.
%
The first question is where this locus lies and what type of invariants we should use to compute it. Let \(\L\) denote the image in \(\P_{(2,2,3,3,4,6)}\) of the map
\[
\Phi: (c_0, \ldots, c_8) \to \left( \xi_0(c_0, \ldots, c_8), \ldots, \xi_5(c_0, \ldots, c_8) \right).
\]
This image is a weighted projective variety in \(\P_{(2,2,3,3,4,6)}\). Alternatively, one can use the Veronese morphism in \cref{veronese} and compute the locus of such rational functions as a projective variety, or even as an affine variety in terms of absolute invariants \(i_1, \ldots, i_5\).

Computing these loci in terms of \(i_1, \ldots, i_5\) is straightforward but highly inefficient. We could express \(i_1, \ldots, i_5\) in terms of the parameters described for each family in the previous section and then use a Gröbner basis approach to eliminate those parameters. However, the degrees of the resulting equations are very large, making this approach computationally challenging even for the relatively simple families considered here.
A more efficient method is to compute the loci as subvarieties of weighted projective spaces. This requires a modification of Buchberger’s algorithm to handle computations in a weighted projective space.

%**************************************
\subsection{Gröbner Bases for Weighted Homogeneous Ideals}

Consider a subvariety in \(\P(w_0, \ldots, w_n)\) defined parametrically by a set of polynomials over \(m\) parameters. The coordinates are given by:
\[
x_i = \xi_i, \quad i = 0, \ldots, n,
\]
where each \(\xi_i\) is a polynomial in \(k[t_1, \ldots, t_m]\), and \(t_1, \ldots, t_m\) are parameters with assigned degrees (typically positive integers). The image of this parameterization is a variety whose dimension depends on \(m\); for \(m\) independent parameters, the dimension is typically \(m - 1\). To define this variety explicitly, we seek polynomial relations among \(x_0, \ldots, x_n\) that hold for all parameter values, and these relations must be weighted homogeneous with respect to the weights \((w_0, \ldots, w_n)\). In other words, the relations must be satisfied not only by tuples \((\xi_0, \ldots, \xi_n)\) but also by all \((\lambda^{w_0} \xi_0, \ldots, \lambda^{w_n} \xi_n)\).

To compute these relations, we construct the ideal
\[
I = \langle x_0 - \xi_0, x_1 - \xi_1, \ldots, x_n - \xi_n \rangle
\]
in the polynomial ring \(k[t_1, \ldots, t_m, x_0, \ldots, x_n]\). The defining equations of the parameterized variety’s image in \(\P(w_0, \ldots, w_n)\) are the elements of the elimination ideal:
\[
I \cap k[x_0, \ldots, x_n],
\]
which consists of all polynomials in \(x_0, \ldots, x_n\) satisfied by the parameterization. By choosing a monomial order that eliminates the parameters \(t_1, \ldots, t_m\), the Gröbner basis of \(I\) includes polynomials free of \(t_j\), representing the desired relations.

To facilitate elimination while respecting the weighted structure, we assign degrees to both parameters and coordinates, say \(\deg(t_j) = d_j\) and \(\deg(x_i) = w_i\). Typically, we set \(d_j = 1\).

The weighted degree lexicographic order (\texttt{wdeglex}) compares monomials first by their total weighted degree, then lexicographically, with parameters ordered above coordinates (e.g., \(t_1 > \cdots > t_m > x_0 > \cdots > x_n\)). This ensures that the Gröbner basis computation prioritizes elimination of the parameters, yielding a set of generators for \(I \cap k[x_0, \ldots, x_n]\) that define the variety’s image in the coordinate ring of \(\P(w_0, \ldots, w_n)\).

The relations obtained from the elimination ideal are algebraically correct but may not be weighted homogeneous, as their terms can have differing weighted degrees. To make them valid defining equations in \(\P(w_0, \ldots, w_n)\), a homogenization step is required:
\begin{enumerate}
    \item \textbf{Degree Computation}: For a relation \(F = \sum c_\alpha x^\alpha\), compute the weighted degree of each term as \(\deg(x^\alpha) = \sum_{i=0}^n w_i \alpha_i\).
    \item \textbf{Target Degree}: Determine a common degree \(d\), typically the least common multiple (LCM) of the degrees of all terms in \(F\), i.e., \(d = \text{LCM}(\deg(x^\alpha))\).
    \item \textbf{Exponent Adjustment}: Transform each term \(c_\alpha x^\alpha\) into \(c_\alpha x^{\alpha k_\alpha}\), where \(k_\alpha = d / \deg(x^\alpha)\), ensuring \(\deg(x^{\alpha k_\alpha}) = k_\alpha \cdot \deg(x^\alpha) = d\). Adjust coefficients as needed to preserve the relation’s zero set.
\end{enumerate}
This process yields a weighted homogeneous polynomial that defines the same variety but adheres to the grading of \(\P(w_0, \ldots, w_n)\).

\subsubsection{General Algorithm}
For a weighted projective space \(\P(w_0, \ldots, w_n)\) with weights \(w = (w_0, \ldots, w_n)\) and a parameterization \(x_i = \xi_i(t_1, \ldots, t_m)\):
\begin{enumerate}
    \item Construct the ideal \(I = \langle x_0 - \xi_0, x_1 - \xi_1, \ldots, x_n - \xi_n \rangle\) in \(k[t_1, \ldots, t_m, x_0, \ldots, x_n]\).
    \item Compute the Gröbner basis \(B\) of \(I\) using a weighted degree lexicographic order with weights \((d_1, \ldots, d_m, w_0, \ldots, w_n)\) and order \(t_1 > \cdots > t_m > x_0 > \cdots > x_n\).
    \item Extract the set \(R = \{ f \in B \mid f \in k[x_0, \ldots, x_n] \}\), the generators of the elimination ideal.

\item For each polynomial \(f \in R\):
 
i) If \(f\) is single-term or all terms have the same weighted degree, retain \(f\) as-is.

ii)  Otherwise, compute \(d = \text{LCM}(\deg(x^\alpha))\) over all terms in \(f\), and adjust \(f\) to \(F = \sum c_\alpha x^{\alpha k_\alpha}\), where \(k_\alpha = d / \deg(x^\alpha)\), ensuring \(F\) is weighted homogeneous of degree \(d\).

    \item Define the ideal \(J = \langle R_{\text{homog}} \rangle\) in \(k[x_0, \ldots, x_n]\), which specifies the subvariety in \(\P(w_0, \ldots, w_n)\).
\end{enumerate}

This method uses Gröbner bases to eliminate parameters and enforces weighted homogeneity, providing a computational framework for studying subvarieties in weighted projective spaces with arbitrary weights \((w_0, \ldots, w_n)\). The method is described in \cite{faugere} and related works by its authors.

In our situation, however, there is an additional consideration. Our invariants \(\xi_0, \ldots, \xi_5\) are homogeneous polynomials when applied to the definition of \(\phi\) in \cref{phi}, meaning they are homogeneous in \(c_0, \ldots, c_7\). However, they are not necessarily homogeneous with respect to our specializations for \(t\) and \(s\) from the previous section. Thus, we first homogenize these defining equations by introducing another variable, say \(t_0\), and then apply the above procedure.

Next, we explicitly compute the loci as weighted projective varieties for each case, illustrating the approach described above.

%**************************************
\subsection{Locus \(\L_1\)}
We assume that \(\phi(z)\) is given as in \cref{phi:L1}. Computing the invariants \(\xi(\phi)\), we obtain:
\[
\begin{split}
\xi_0 &= 2 (t + 3) (s + 3), \quad
\xi_1 = \frac{1}{2} (t - 1) (s - 1), \quad
\xi_2 = \xi_3 = 0, \\
\xi_4 &= -\frac{1}{3} \left( t^2 s^2 + 2 t^2 s + 3 t^2 + 2 t s^2 - 8 t s - 6 t + 3 s^2 - 6 s + 9 \right), \\
\xi_5 &= -(t - s)^2 (t s + t + s - 3).
\end{split}
\]
There is an involution permuting \(t\) and \(s\). Let \(u\) and \(v\) denote invariants of that involution, defined as:
\[
\boxed{u = t + s \quad \text{and} \quad v = t s.}
\]
We can express all invariants in terms of \(u\) and \(v\) as follows:
\[
\begin{split}
\xi_0 &= 2 (v + 3u + 9), \quad \xi_1 = \frac{1}{2} (v - u + 1), \quad \xi_2 = \xi_3 = 0, \\
\xi_4 &= -\frac{1}{3} \left( v^2 + 3 u^2 + 2 u v - 14 v - 6 u + 9 \right), \\
\xi_5 &= (u^2 - 4 v) (3 - u - v).
\end{split}
\]
Also, \(I_6 = (t s - 1)^2 = (v - 1)^2 \neq 0\), so \(v \neq 1\). Moreover, \(J_6\) is given by:
\begin{equation}\label{J6:L1}
J_6 = 4 (s + 3) (t + 3) (s t + s + t - 3)^2 = 4 (3u + v + 9) (u + v - 3)^2.
\end{equation}
By computing a Gröbner basis for the weighted homogeneous system as described above, we obtain the following degree 24 weighted hypersurface:
\begin{small}
\begin{equation}\label{eq:L1}
\begin{split}
\L_1: \quad & \xi_0^6 \xi_1^6 - 54 \xi_0^4 \xi_1^4 \xi_4^2 - \frac{27}{4} \xi_0^4 \xi_1^3 \xi_5^2 - 27 \xi_0^3 \xi_1^4 \xi_5^2 - 108 \xi_0^3 \xi_1^3 \xi_4^3 + 729 \xi_0^2 \xi_1^2 \xi_4^4 + \frac{729}{4} \xi_0^2 \xi_1 \xi_4^2 \xi_5^2 \\
& + \frac{729}{64} \xi_0^2 \xi_5^4 + 729 \xi_0 \xi_1^2 \xi_4^2 \xi_5^2 + 2916 \xi_0 \xi_1 \xi_4^5 + \frac{243}{8} \xi_0 \xi_1 \xi_5^4 + \frac{729}{2} \xi_0 \xi_4^3 \xi_5^2 + \frac{729}{4} \xi_1^2 \xi_5^4 \\
& + 1458 \xi_1 \xi_4^3 \xi_5^2 + 2916 \xi_4^6 + \frac{729}{2} \xi_4 \xi_5^4 = 0
\end{split}
\end{equation}
\end{small}

%We can also compute \(\L_1\) in terms of the absolute invariants by eliminating \(u\) and \(v\). These computations are lengthy but express \(u\) and \(v\) as rational functions of the absolute invariants. Thus, the map in \cref{Phi} becomes:
%\[
%\begin{split}
%\Phi_1: k^2 \setminus \{ v = 1 \} &\to k^3 \\
%(u, v) &\to (i_1, i_2, i_5),
%\end{split}
%\]
%which is invertible when \(J_6 \neq 0\), i.e., \((u + v - 3) (v + 3u + 9) \neq 0\). The map \(\Phi_1\) provides a birational parametrization of \(\L_1\).

We can also compute $\L_1$ in terms of the absolute invariants by eliminating $u$ and $v$. These computations are lengthy but show that, on the open subset where $J_6 \neq 0$, both $u$ and $v$ can be expressed as rational functions of $(i_1,i_2, i_5)$. Thus the parametrization  induces a rational map
\[
\Phi_1 : k^2 \setminus \{v = 1\} \longrightarrow \L_1 \subset k^3,\qquad  (u,v) \longmapsto (i_1,i_2,i_5),
\]
whose image is exactly the hypersurface $\L_1$, and which is birational with rational inverse given by the functions $u = u(i_1,i_2,i_5)$ and $v = v(i_1,i_2,i_5)$ on $L_1 \cap \{J_6 \neq 0\}$.

\begin{rem}
The invariants \(u = t + s\) and \(v = t s\) mirror dihedral invariants in genus 2 curves \cite{2000-1, 2003-3}, symmetric under swapping \(t\) and \(s\). Here, they are invariant under parameter rescaling and conjugation adjusting \(t\) and \(s\), simplifying computations akin to symmetric polynomials in root permutations—a pattern likely extending to higher-degree rational functions.
\end{rem}

If \(\xi_0 = 0\), then \(t = -3\) or \(s = -3\), and the moduli point is:
$
\xi(\phi) = [0 : 8 : 0 : 0 : 0 : 0],
$
a singular point in \(\L_1\) with automorphism group isomorphic to \(D_4\) (cf. \cref{eq:L7}).

\begin{rem}
Note that the factor \(u^2 = 4v\) (or equivalently \(t = s\)) makes \(\xi_5 = 0\). This locus corresponds to \(\L_4\) (cf. \cref{eq:L4}), where \(\phi(z)\) has an extra involution. Since \(\L_4\) lies in the intersection of \(\L_1\) and the next locus, we will discuss it in detail later.
\end{rem}

%**************************************
\subsection{Locus \(\L_2\)}
Assume that \(\phi\) is given as in \cref{phi:L2}. We follow the same approach as above. Computing \(\xi(\phi)\), we get:
\[
\begin{split}
\xi_0 &= -2 (t + s)^2, \quad 
\xi_1 = \frac{1}{6} \left( (t - s)^2 - 12 \right), \quad   
\xi_2 = -\frac{1}{36} (s - t) \left( (t - s)^2 + 36 \right), \\
\xi_3 &= \frac{2}{3} (s - t) (t + s)^2, \quad
\xi_4 = -\frac{1}{9} (t + s)^2 \left( (s - t)^2 + 12 \right), \quad
\xi_5 = 0.
\end{split}
\]
We define invariants:
\[
u := (t + s)^2 \quad \text{and} \quad v := s - t,
\]
and express the invariants in terms of \(u\) and \(v\) as follows:
\[
\begin{split}
\xi_0 &= -2 u, \quad \xi_1 = \frac{1}{6} (v^2 - 12), \quad \xi_2 = -\frac{1}{36} v (v^2 + 36), \\
\xi_3 &= \frac{2}{3} u v, \quad \xi_4 = -\frac{1}{9} u (v^2 + 12), \quad \xi_5 = 0.
\end{split}
\]
Here, we have:
\[
J_6(\phi) = -16 (s + t)^4 = -16 u^2 \quad \text{and} \quad I_6 = \frac{v^2 - u + 4}{4}.
\]
Using a Gröbner basis to eliminate \(u\) and \(v\), we determine the equations for \(\L_2\) in \(\P(2, 2, 3, 3, 4, 5)\) as follows:
\begin{equation}\label{eq:L2}
\L_2: \quad 
\left\{
\begin{aligned}
& \xi_5 = 0, \\
& \xi_0^2 \xi_1 + 3 \xi_0 \xi_4 - 3 \xi_3^2 = 0, \\
& \xi_0^2 \xi_2 + \frac{1}{2} \xi_0 \xi_1 \xi_3 - 3 \xi_3 \xi_4 = 0, \\
& \xi_0 \xi_1 \xi_4 - \xi_0 \xi_2 \xi_3 - \frac{1}{2} \xi_1 \xi_3^2 + 3 \xi_4^2 = 0
\end{aligned}
\right.
\end{equation}

The absolute invariants are:
\[
\begin{split}
i_1 &= \frac{1024 u^6}{(-v^2 + u - 4)^2}, \quad
i_2 = \frac{(v^2 - 12)^6}{2916 (-v^2 + u - 4)^2}, \quad
i_3 = \frac{v^4 (v^2 + 36)^4}{104976 (-v^2 + u - 4)^2}, \\
i_4 &= \frac{256 v^4 u^4}{81 (-v^2 + u - 4)^2}, \quad
i_5 = -\frac{16 u^3 (v^2 + 12)^3}{729 (-v^2 + u - 4)^2}.
\end{split}
\]
Assuming \(J_6 \neq 0\) (i.e., \(u \neq 0\)), we eliminate \(u\) and \(v\), finding:
\begin{equation}\label{L2:uv}
u = -\frac{1}{2} \xi_0, \quad v = 3 \frac{\xi_3}{\xi_0}.
\end{equation}
Similarly to the previous case, the parameters $(u,v)$ give a rational
parametrization of the locus $\L_2$ of maps with automorphism group $C_3$.
Eliminating $u$ and $v$ from the expressions for the absolute invariants
$(i_1,\ldots,i_5)$ yields the defining equation of $\L_2$ as a hypersurface in
$k^3$ (or in $k^5$ when all absolute invariants are retained).  On the open
subset where $J_6 \neq 0$, both $u$ and $v$ can be recovered as rational
functions of the absolute invariants, so the parametrization induces a
birational map
\[
\Phi_2 : k^2 \dashrightarrow \L_2,
\qquad
(u,v) \longmapsto (i_1,i_2,i_5).
\]
Thus $\L_2$ is an irreducible surface (2-dimensional variety) birationally
parametrized by $(u,v)$, and the field of moduli of $\phi(z)$ is
$k(u,v)$.

Consider now the case when:
\[
J_6 = -16 (s + t)^4 = -16 u^2 = 0,
\]
implying \(t = -s\). The function becomes:
\[
\phi(z) = \frac{s z^2 + 1}{z^3 - s z},
\]
and then \(\xi_0 = \xi_3 = \xi_4 = \xi_5 = 0\), \(\xi_1 = \frac{v^2}{6} - 1\), and \(\xi_2 = -v\). This corresponds to \(\L_5\), as we will see later.

This concludes the loci \(\L_1\) and \(\L_2\) for rational functions with involutions as described in \cref{sec:inv}. The remaining loci are of dimension one or zero, making their computations simpler. We will address each in detail below.

%****************
\subsection{Locus $\L_4$}

Assume \(\phi\) is as in \cref{phi:L4}, with homogeneous parameterization in \(\P_{(2,2,3,3,4,6)}\):
\[
\xi(\phi) = \left[ -8 t^2, \, -2 u^2, \, 0, \, 0, \, -\frac{16 t^2 u^2}{3}, \, 0 \right].
\]
In the affine patch (\(u = 1\)):
\[
\xi(\phi) = \left[ -8 t^2, \, -2, \, 0, \, 0, \, -\frac{16 t^2}{3}, \, 0 \right].
\]
The Gröbner basis with weighted degree lexicographic order is:
\begin{equation}\label{eq:L4}
\L_4: \quad 
\left\{
\begin{aligned}
& \xi_0 \xi_1 + 3 \xi_4 = 0, \\
& \xi_2 =  \xi_3 =  \xi_5 = 0
\end{aligned}
\right.
\end{equation}
This defines a 1-dimensional variety in \(\P_{(2,2,3,3,4,6)}\).
% (codimension 4). Sage reports an affine dimension of 2, but in projective space, it’s \(2 - 1 = 1\), matching a 1-parameter family. Both parameterizations satisfy these equations.

%***********  L_5
\subsection{Locus $\L_5$}Assume $\phi$ as in \cref{phi:L5}.
Its invariants are 
\[
\xi(\phi) = \left[ 0, \, \frac{2 (s^2 + 3)}{3}, \, -\frac{2 s (s- 3) (s + 3)}{9}, \, 0, \, 0, \, 0 \right]
\]
and $I_6=(s- 1)^2 (s + 1)^2$.  Here we use the absolute invariants $i_2$ and $i_3$ and have the following system 
\[
\left\{
\begin{split}
&  i_2 \,s^{4}-2 i_2 \,s^{2}+ i_2 -\frac{4}{9} s^{4}-\frac{8}{3} s^{2}-4  =0  \\
&  i_3 \,s^{4}-2 i_3 \,s^{2}+i_3 -\frac{4}{81} s^{6}+\frac{8}{9} s^{4}-4 s^{2} =0 
\end{split}
\right.
\]
By taking the resultant of these two polynomials with respect to $t$ we get the affine version of this 1-dimensional variety
\[
16 i_2^{3}-72 i_2^{2} i_3 +81 i_2 \,i_3^{2}-96 i_2^{2}+216 i_2 i_3 -36 i_3^{2}+144 i_2 -96 i_3 -64 =0
\]
By replacing for $i_2$ and $i_3$ their definitions we get  a degree 18 weighted hypersurface   
\begin{equation}\label{eq:L5}
\begin{split}
\L_5:  \quad  &  	72 \xi_1^{6} \xi_2^{2} -16 \xi_1^{9} +96 \xi_1^{6} I_6 -81 \xi_1^{3} \xi_2^{4}-216 \xi_1^{3} \xi_2^{2} I_6 -144 \xi_1^{3} I_6^{2}  +36 \xi_2^{4} I_6 \\
&   +96 \xi_2^{2} I_6^{2}+64 I_6^{3} =0 \\
\end{split}
\end{equation}

%*************************   L_7
\subsection{Locus $\L_7$}
Assume $\phi$ as in \cref{phi:L7}.
Its invariants are 
\begin{equation}\label{eq:L7}
\xi(\phi)= \left[   0, -2, 0, 0, 0, 0    \right] \equiv [0,1,0,0,0,0]
\end{equation}

Here we write $\L(G)$ for the locus of degree-$3$ rational maps whose full
automorphism group contains a subgroup conjugate to $G \subset \PGL_2(k)$.
In particular, $\L(C_4)$ and $\L(D_4)$ denote the loci corresponding to the
cyclic and dihedral subgroups of order $4$.  We also let 
$\sigma_0(z) = -1/z$, the involution generating the order-$2$ subgroup of $S_4$
that is not contained in any copy of $D_4 \subset S_4$.

%Notably, the involution $\s' (z) = \frac 1 z$ acts as an additional automorphism for $\phi=\frac{y^3}{x^3}$, confirming that the loci $\L (C_4)$ and $\L (D_4)$ are identical. Additionally, $\s_0\notin\Aut(\phi)$, implying that the locus of $\L (S_4)$ is empty.  

 For the map $\varphi(z) = y^3/x^3 = z^{3}$ in normalized coordinates, the
involution $\sigma' (z) = 1/z$ acts as an additional automorphism, so the loci
$\L(C_4)$ and $\L(D_4)$ coincide.  On the other hand, the involution
$\sigma_0(z) = -1/z$, which would be required to extend $D_4$ to $S_4$, does
not preserve $\varphi(z)$, and therefore no cubic rational map admits $S_4$ as
a subgroup of its automorphism group.  Hence $\L(S_4)$ is empty.

%*******  L_6
\subsection{Locus $\L_6$}
Assume $\phi$ as in \cref{phi:L4}. 
The moduli point is 
\begin{equation}\label{eq:L6}
\L_6: \quad \xi(\phi) = [0 : 0 : 18 : 0 : 0 : 0] \equiv [0 : 0 : 1 : 0 : 0 : 0]
\end{equation}
Furthermore, it should be noticed that in this case $J_6(\phi)=0$.  

%*******   L_3
\subsection{Locus $\L_3$} 

Assume $\phi$ as in \cref{phi:L3}.  Next, we compute its  invariants
\[
\xi(\phi) = \left[ -2, \, \frac{1}{6}, \, \frac{27 t + 2}{72}, \, -\frac{27 t + 2}{3}, \, \frac{54 t - 1}{9}, \, -\frac{t (27 t - 16)}{4} \right]
\]
Notice that $I_6=t\neq 0$.  
The absolute invariants are
\[
i_1 =  \frac {64} {t^6}, \; i_2= \frac 1 {46656 t^6}, \; i_3= \frac {(27 t + 2)^4}{ 26873856 t^2}, \;
i_4 = \frac { (27 t + 2)^4}{81 t^2}, \;
i_5= \frac {(54 t - 1)^3} { 729 t^2 }
\]
 By eliminating $t$ from these equations, we can express $t$ as a rational function in terms of $i_1$, $i_5$ and get the following affine curve
 %
 %     We should divide this polynomial by its content. It seems as all coefficients are divisible by at least 2
 \begin{small}
\begin{equation}
\begin{split}
& 614787626176508399616  i_1  i_5^6 + 44264709084708604772352  i_1  i_5^5 + 49589822592  i_1^2  i_5^3 \\
&  + 1150882436202423724081152  i_1  i_5^4  - 6248317646592  i_1^2  i_5^2  - 4760622968832  i_1^2 \\
& +  i_1^3 + 35704672266240  i_1^2  i_5 + 59491769009848364814041088  i_1  i_5^2   \\
& + 79322358679797819752054784  i_1  i_5 + 7554510350456935214481408  i_1 \\
& - 3996019499184929743169818581358608384 + 12984314664847857399889920  i_1  i_5^3 = \, 0 
\end{split}
\end{equation}
\end{small}
We can express this as a weighted  projective curve by substituting for $i_1$ and $i_5$:
 \begin{small}
\begin{equation}\label{eq:L3}
\begin{split}
 &	I_6^4 \xi_0^9 - 2834352 I_6^3 \xi_0^6 \xi_4^3 + 24794911296 \xi_0^3 \xi_4^9 + 3779136 I_6^5 \xi_0^6 + 892616806656 I_6^2 \xi_0^3 \xi_4^6 \\
 & + 7140934453248 I_6^4 \xi_0^3 \xi_4^3 + 4760622968832 I_6^6 \xi_0^3 + 1999004627104432128 I_6^7 =0
\end{split}
\end{equation}
\end{small}
This completes all the cases.

%\begin{rem}
%We computed each locus as a weighted projective variety in the weighted projective space $\P_{(2,2,3,3,4,6)} $. From the arithmetic point of view we are interested on rational points on these weighted projective varieties.  Rational points on weighted varieties  are discussed in  \cite{2023-01}  based on   weighted heights which give in general a more efficient approach then projective heights. 
%\end{rem}

%*******************
\section{A Database of Cubic Rational Functions}  \label{sec:ml-aut}

In light of the recent success of machine-learning techniques in algebraic geometry — particularly the neuro-symbolic classification of genus-two curves and their isogeny graphs \cite{2024-03}, the database-driven computation of Galois groups of polynomials \cite{2024-05}, and the earlier systematic enumeration of rational points on moduli spaces \cite{2016-5} — we apply the same methodology to the moduli space $  \M_3^1  $ of degree-three rational functions. Determining automorphism groups (and more generally the stratification of $  \M_d^1  $) becomes increasingly subtle as the degree $  d  $ grows, and the raw coefficient space $  \Rat_3^1 \subset \P^7  $ suffers from massive redundancy under $  \PGL_2  $-action. Following the approach that proved highly effective for hyperelliptic curves, we therefore construct a large, clean database of rational cubics over $  \Q  $ of naive height at most 4, represented directly by their weighted projective invariants $  \xi_0,\dots,\xi_5  $ in $  \P_{(2,2,3,3,4,6)}^5(\Q)  $. This database eliminates all conjugacy redundancies, provides a natural low-dimensional feature set, and serves as a testbed for automated classification of automorphism groups (and later for minimal fields of definition and inclusion relations among strata).

In this section, we construct a comprehensive database of cubic rational functions over the rational numbers \(\Q\), denoted \(\Rat_3^1\), leveraging the weighted projective space \(\P_\w^5(\Q)\) with weights \(\w = (2, 2, 3, 3, 4, 6)\) as a parametrization framework. This database, denoted \(\cP_3^h\), catalogs rational functions \(\phi(x) = \frac{f_0(x)}{f_1(x)} \in \Q(x)\) of degree 3, where \(f_0(x)\) and \(f_1(x)\) are polynomials of degrees 3 and 2, respectively, ensuring the degree of the rational function is \(\deg(\phi) = \deg(f_0) - \deg(f_1) = 3\). Each function is represented as a projective point in \(\P_\Q^7\), and we impose constraints on height and coprimality to define the dataset systematically.

A cubic rational function \(\phi(x) \) is given as in \cref{phi-deg3} and  corresponds to a point \(P_\phi = [c_0 : c_1 : \cdots : c_7] \in \P_\Q^7\). We define the naive height of \(\phi\) as 
\[
H(\phi) = \max \{ |c_i| \mid i = 0, \ldots, 7 \},
\]
 and restrict our dataset to functions with \(H(\phi) \leq h\), where \(h\) is a specified height bound. To ensure well-definedness, we require that the coefficients are coprime, i.e., \(\gcd(c_0, c_1, \ldots, c_7) = 1\), and that the resultant \(I_6(\phi) = \Res(f_0, f_1) \neq 0\), guaranteeing that \(\phi\) has no common roots between numerator and denominator. Thus, we define:
\[
\cP_3^h := \{ P_\phi \in \P_\Q^7 \mid H_\Q(P_\phi) \leq h, \, \gcd(c_0, \ldots, c_7) = 1, \, I_6(\phi) \neq 0 \}.
\]
For each \(\phi \in \cP_3^h\), we compute several invariants and properties to enrich the database. These include the automorphism group \(\Aut(\phi)\), determined by checking against the classification of possible groups for cubic rational functions (e.g., \(\{e\}\), \(C_2\), \(D_4\), etc.) as outlined in \cref{tab:deg3}, the invariants \(\mathbf{p} = [\xi_0, \xi_1, \xi_2, \xi_3, \xi_4, \xi_5]\) in \(\P_\w^5(\Q)\), and the absolute invariants \((i_1, i_2, i_3, i_4, i_5)\). Additionally, we calculate the invariant \(J_6(\phi)\) and the weighted moduli height \(\hat{h}(\phi)\), which measures the height of the invariants in the weighted projective space; as discussed in detail in \cite{2019-1, 2022-1}.  
In our case, for a given $\phi$ we compute \(\mathbf{p} = [\xi_0(f), \xi_1(f), \xi_2(f), \xi_3(f), \xi_4(f), \xi_5(f)]\ \in \P_\w^5(\Q)\)
 defined over a number field $K$ and    its \textbf{weighted moduli height} is
\[
\hat{h}(\phi)
   = \prod_{v} 
     \max_{0\le i \le 5} 
     \bigl\{ |\xi_i|_{v}^{1/w_i} \bigr\},
\qquad
(w_0,w_1,w_2,w_3,w_4,w_5)=(2,2,3,3,4,6),
\]
where the product is over all places $v$ of $K$.

The data is stored in a Python dictionary, with keys given by the coefficient tuples \((c_0, c_1, \ldots, c_7)\) and values as lists containing:

\begin{itemize}
    \item \(H(\phi)\): the naive height,
    \item \(\mathbf{p} = (\xi_0, \xi_1, \xi_2, \xi_3, \xi_4, \xi_5)\): the projective invariants,
    \item \(\hat{h}(\phi)\): the weighted moduli height,
    \item \(J_6(\phi)\): an additional invariant,
    \item \(\Aut(\phi)\): the automorphism group (e.g., '\{e\}'),
    \item \((i_1, i_2, i_3, i_4, i_5)\): the absolute invariants.
\end{itemize}

An example entry from the database is:
\[
\begin{split}
& (2, 3, -1, -3, 1, 2, -3, 1) \mapsto \left[ 3, (32, 12, 13, -164, -424, 2572), 5.66, 89360, '\{e\}',   \right.  \\
& \left. \left( \frac{1073741824}{44521}, \frac{2985984}{44521}, \frac{531441}{712336}, \frac{723394816}{44521}, -\frac{76225024}{44521} \right) \right].
\end{split}
\]

Here, the key \((2, 3, -1, -3, 1, 2, -3, 1)\) represents the coefficients of 
\[
\phi(x) = \frac{2 + 3x - x^2 - 3x^3}{1 + 2x - 3x^2 + x^3},
\]
 with \(H(\phi) = 3\), invariants \(\xi_i\) mapping to a point in \(\P_\w^5(\Q)\), a weighted height of 5.66, \(J_6 = 89360\), the trivial automorphism group \(\{e\}\), and the corresponding absolute invariants.

To illustrate the scope of the dataset,  \cref{tab:aut_distribution} shows   the distribution of automorphism groups for rational cubics of weighted moduli  height \(H(\phi) \le 3\).  The entries are grouped according to the loci  \(\M_3^1, \L_1, \ldots, \L_7\), corresponding respectively to the trivial group \(\{e\}\) and the nontrivial automorphism groups \(C_2\), \(C_3\), \(D_4\),  and so on, as classified earlier in the paper.

\begin{table}[h]
    \centering
    \caption{Distribution of   groups for cubics of height \(H(\phi) \leq 3\)}
    \begin{tabular}{|c|c|c|c|c|c|c|c|c|c|}
        \hline
        \(H\) & \(\M_3^1\)   & \(\L_1\)   & \(\L_2\)   & \(\L_3\)   & \(\L_4\)   & \(\L_5\) & \(\L_6\) & \(\L_7\)   & Total \\
        \hline
        1 & 2223 & 9 & 8 & 6 & 0 & 0 & 0 & 2 & 2248 \\
        2 & 84267 & 34 & 12 & 17 & 0 & 0 & 0 & 2 & 84332 \\
        3 & 814126 & 81 & 66 & 44 & 1 & 22 & 18 & 50 & 814408 \\
        \hline
        Total & 900616 & 124 & 86 & 67 & 1 & 22 & 18 & 54 & 900988 \\
        \hline
    \end{tabular}
    \label{tab:aut_distribution}
\end{table}
 
The distribution is strongly skewed toward the trivial automorphism group \(\{e\}\) (the locus \(\M_3^1\)), which accounts for 900{,}616 of the 900{,}988
distinct conjugacy classes of height \(H(\phi) \le 3\).  Nontrivial groups such as \(D_4\) (\(\L_3\)) and \(A_4\) (\(\L_7\)) occur much less frequently, in
accordance with their rarity among cubic rational maps.

The case \(H(\phi)=4\) is substantially more demanding computationally.   Our current search has identified at least \(350{,}679\) distinct moduli points in \(\cM_3^1\) of weighted moduli height \(4\), but the  enumeration is not yet certified to be complete.  We therefore record  \(350{,}679\) as a rigorous lower bound for the number of conjugacy classes  of height \(4\).  The fully completed data for heights \(0 \le h \le 3\)  appear in \cref{tab:aut_distribution}.

%The dataset contains $N_{\mathrm{train}} = 2,078,697$ samples in the training set and $N_{\mathrm{test}} = \dots$ samples in the testing set; the counts for  each stratum are listed in Table~\ref{tab:class_distribution}.

This database provides a robust foundation for subsequent analysis, including the machine learning classification of automorphism groups discussed in Section 8. The inclusion of both projective and absolute invariants, alongside height metrics and group labels, enables a detailed exploration of the geometric and arithmetic properties of \(\Rat_3^1\).

%*****************************************************************************************************
%\section{Machine Learning and Weighted Projective Clustering}\label{sec:ML}  
\section{Classification of  Groups   Using Machine Learning}

The machine–learning experiments in this section are intended solely as an  illustrative exploration of how the absolute invariants may be used within a 
computational classification framework.   No mathematical result in the paper depends on these experiments, and their  purpose is not to establish statistical performance guarantees but to  demonstrate that the invariant–theoretic structure developed in the earlier  sections lends itself naturally to algorithmic methods.  
The extreme class imbalance visible in small-height data is intrinsic to the  geometry of the moduli space and not a feature of the classifier.   Accordingly, the numerical results below should be interpreted as qualitative  evidence rather than as a benchmark evaluation of machine–learning models.

Our experiments aim to classify the automorphism group of a rational cubic  function by means of supervised learning.   Two approaches were tested: a baseline model using the coefficients of the  rational function as input features, and a refined model using the weighted   invariants.  
%The latter approach is more closely aligned with the moduli-theoretic structure,  since the invariants determine the conjugacy class uniquely;  see \cref{prop-1}. 

The dataset used here consists of rational functions of weighted moduli height  $H(\phi)\le 4$, partitioned by automorphism group.    Its distribution is extremely imbalanced: the generic case $\{e\}$ accounts for  over $99\%$ of all samples, reflecting the fact that rational cubics with  nontrivial automorphism group form very sparse loci in $\cM_3^1$.  
 \cref{tab:class_distribution} lists the proportions and the number of   samples appearing in the testing set for each group.  
\begin{table}[h]
    \centering
    \begin{tabular}{ccc}
        \toprule
        Automorphism Group & Proportion of Data & Samples in Test Set \\
        \midrule
        \{e\} (6)      & 99.84\%   & 179{,}910 \\
        C2-2 (2)       & 0.12\%    & 214 \\
        C2-1 (1)       & 0.02\%    & 29 \\
        D4 (3)         & 0.01\%    & 16 \\
        V4-1 (4)       & 0.0075\%  & 17 \\
        A4 (0)         & 0.0067\%  & 12 \\
        V4-2 (5)       & 0.0002\%  & 2 \\
        \bottomrule
    \end{tabular}
    \caption{Class distribution in the dataset.}
    \label{tab:class_distribution}
\end{table}

%----------------------------------------------------------------------------
\subsection{Initial Model: Using Coefficients as Features}
We formulate the classification of automorphism groups as a supervised learning 
problem, where the input features are the coefficients of the rational 
functions and the target variable is the corresponding automorphism group.  
A Random Forest classifier with 100 estimators was trained on these features.

The reported overall accuracy of $99.97\%$ is not meaningful in this setting, 
because the dataset is dominated by the generic class $\{e\}$, which accounts 
for $99.84\%$ of all samples.  A classifier that always predicts $\{e\}$ would 
already achieve almost the same accuracy.  
More informative performance metrics appear in 
Table~\ref{tab:initial_performance}.  
These show that, although the majority class is classified perfectly 
(precision, recall, and F1-score all equal to $1.00$), the minority classes 
exhibit very low recall—for instance, $0.14$ for $C2$--$1$ and $0.41$ for 
$V4$--$1$—indicating that the model frequently collapses the rare strata into 
the dominant class.  This behavior reflects the fact that raw coefficients do 
not encode the moduli-theoretic structure of $\cM_3^1$ and are 
therefore a poor feature set for distinguishing automorphism groups.
\begin{table}[h]
    \centering
    \begin{tabular}{cccc}
        \toprule
        Class & Precision & Recall & F1-score \\
        \midrule
        A4 (0)      & 1.00 & 0.58 & 0.74 \\
        C2-1 (1)    & 1.00 & 0.14 & 0.24 \\
        C2-2 (2)    & 0.97 & 0.91 & 0.94 \\
        D4 (3)      & 1.00 & 0.94 & 0.97 \\
        V4-1 (4)    & 0.78 & 0.41 & 0.54 \\
        \{e\} (6)   & 1.00 & 1.00 & 1.00 \\
        \bottomrule
    \end{tabular}
    \caption{Performance metrics   using coefficients as 
    features. 
    %The high overall accuracy reflects the dominance of the generic    class $\{e\}$.
    }
    \label{tab:initial_performance}
\end{table}
\subsubsection{Addressing Class Imbalance}
To partially mitigate the class imbalance, we apply class weighting to the 
Random Forest classifier.  
For each class~$i$ we assign a weight 
$w_i = \frac{N}{C \times n_i}$, where $N$ is the total number of samples, 
$C$ the number of classes, and $n_i$ the number of samples in class~$i$.  
This increases the influence of underrepresented classes during training.

After retraining with these weights, the reported accuracy remains 
$99.96\%$, but the performance across classes becomes more informative.  
As shown in Table~\ref{tab:weighted_performance}, recall improves for some 
minority groups—for example, $A4$ increases from $0.58$ to $0.83$—while other 
rare classes remain difficult to identify (e.g.\ $C2$--$1$ decreases to $0.10$ 
and $V4$--$1$ to $0.29$).  
This indicates that class weighting reduces, but does not eliminate, the 
limitations inherent in using coefficients rather than invariant-based 
representations.
\begin{table}[h]
    \centering
    \begin{tabular}{cccc}
        \toprule
        Class & Precision & Recall & F1-score \\
        \midrule
        A4 (0)      & 1.00 & 0.83 & 0.91 \\
        C2-1 (1)    & 1.00 & 0.10 & 0.19 \\
        C2-2 (2)    & 0.98 & 0.90 & 0.94 \\
        D4 (3)      & 1.00 & 0.94 & 0.97 \\
        V4-1 (4)    & 0.71 & 0.29 & 0.42 \\
        \{e\} (6)   & 1.00 & 1.00 & 1.00 \\
        \bottomrule
    \end{tabular}
    \caption{Performance metrics after applying class weighting to the 
    coefficient-based model.}
    \label{tab:weighted_performance}
\end{table}
%
%-----------------
\subsection{Using Invariants as Input Features}
Given the limitations of the coefficient-based approach—particularly its 
difficulty distinguishing minority strata even after class weighting—we train a 
second model using the weighted invariants introduced in Sections~4–6 as input 
features, following the approach of graded neural networks in  \cite{2024-2,   sh-89}. 
These invariants provide a complete moduli-theoretic description: by Lemma~3, 
they determine the conjugacy class of a rational cubic uniquely.  
Thus, unlike raw coefficients, they encode exactly the structure relevant for 
identifying the automorphism group.

A Random Forest classifier with 100 estimators was trained using the same 
training–testing split and the same class-weighting scheme as in the previous 
subsection.  
The results, summarized in Table~\ref{tab:invariants_performance}, show a 
substantial improvement across all minority classes.  
The reported accuracy 
\[
\text{Reported Accuracy} = 0.9999223076837701 \approx 99.992\%
\]
should again be interpreted in light of the strong class imbalance; however, in 
contrast to the coefficient-based model, the classwise precision and recall here 
provide meaningful information because the invariants separate strata in 
$\cM_3^1$.
\begin{table}[h]
    \centering
    \begin{tabular}{lcccc}
        \toprule
        Class & Precision & Recall & F1-score & Support \\
        \midrule
        A4 (0)      & 1.00 & 1.00 & 1.00 & 12 \\
        C2-1 (1)    & 1.00 & 0.93 & 0.96 & 29 \\
        C2-2 (2)    & 1.00 & 0.95 & 0.98 & 214 \\
        D4 (3)      & 1.00 & 1.00 & 1.00 & 16 \\
        V4-1 (4)    & 0.89 & 1.00 & 0.94 & 17 \\
        \{e\} (6)   & 1.00 & 1.00 & 1.00 & 179{,}910 \\
        \midrule
        \textbf{Accuracy}      &&& 1.00 & 180{,}198 \\
        \textbf{Macro avg}     & 0.98 & 0.98 & 0.98 & 180{,}198 \\
        \textbf{Weighted avg}  & 1.00 & 1.00 & 1.00 & 180{,}198 \\
        \bottomrule
    \end{tabular}
    \caption{Performance metrics using invariants as input features with 
    class weighting.  
%    Perfect classification of $A_4$ and $D_4$ reflects the fact that each of     these strata consists of a single conjugacy class.
    }
    \label{tab:invariants_performance}
\end{table}
The recall for minority classes improves dramatically: $C2$--$1$ increases from 
$0.10$ to $0.93$, $C2$--$2$ from $0.90$ to $0.95$, and the strata $A_4$, $D_4$, 
and $V4$--$1$ achieve perfect recall.  
The slight precision drop for $V4$--$1$ (to $0.89$) arises from a small number 
of false positives and is consistent with the small sample size.

It is important to note that perfect performance on $A_4$ and $D_4$ is not a 
reflection of model sophistication: each of these loci consists of a single 
conjugacy class in $\cM_3^1$, uniquely determined by its invariants.  
Similarly, high performance on $C2$--$2$ does not arise from a single criterion 
such as the vanishing of $\xi_5$, since the absolute invariants depend on all 
weighted coordinates (\cref{absinv}) and uniquely determine the moduli point.

In conclusion, the invariant-based model exhibits far better stability and 
classwise performance than the coefficient-based model.  
This improvement is a consequence of the moduli-theoretic information encoded by 
the invariants rather than of any machine-learning effect.  
These experiments therefore serve only to illustrate that the invariant theory 
developed earlier naturally lends itself to computational classification; the 
mathematical results of the paper do not depend on the performance of the 
models.

%********************
\section{Conclusions and Further Directions} \label{sec:conclusion}
A central motivation for this work is the structural analogy between the
moduli space of rational functions and the classical moduli spaces of
hyperelliptic and superelliptic curves.  
In both settings, the moduli problem is governed by the action of $\PGL_2$ on
binary forms (or on suitable tuples of forms), the resulting quotients admit
coordinate descriptions in terms of weighted projective invariants, and the
loci where the objects acquire extra automorphisms are cut out by explicit
relations among these invariants.  
For hyperelliptic curves this viewpoint underlies the rich invariant theory of
binary forms; in the degree-three dynamical setting, an analogous picture
arises for the pair $(F,G)\in V_4\oplus V_2$, whose invariants
$\xi_0,\dots,\xi_5$ provide weighted-projective coordinates on $\M_3^1$ and,
via the absolute invariants, determine the conjugacy class.

Within this framework, we identify $\M_3^1$ with the weighted projective space
$\P_\w^5$ (weights $(2,2,3,3,4,6)$), give explicit generators
$\xi_0,\dots,\xi_5$ for the invariant map, and characterize the loci
$\L_1,\dots,\L_7$ corresponding to the finite subgroups
$C_2, C_3, C_4, V_4, D_4,$ and $A_4$ of $\PGL_2$.  
The absolute invariants $i_1,\dots,i_5$ are shown to classify conjugacy
classes uniquely, providing a complete and computable coordinate system for
$\M_3^1$ analogous to the absolute Igusa invariants for genus-two curves.

The construction of the dataset $\cP_3^4$ further illustrates the
arithmetic distribution of these loci inside $\M_3^1$, with the trivial
automorphism locus dominating for small heights, just as in the hyperelliptic
case where curves with extra automorphisms form thin subsets.  
This reinforces the geometric picture: enhanced symmetry is exceptional and
detectable through the vanishing patterns of weighted invariants.

Section~8 provides only a brief computational illustration.  
No mathematical result of the paper depends on these experiments; they merely
show that the invariant map offers a natural feature representation for
algorithmic classification and that the stratification of $\M_3^1$ is
reflective in the numerical data.  
Their role is supplementary rather than foundational.

Several avenues for further research arise naturally.  
The analogy with hyperelliptic moduli suggests studying higher-degree rational
maps via invariant theory for $(V_{d+1}, V_{d-1})$, where the structure of the
ring of invariants remains largely unexplored.  
Extending the explicit description of automorphism loci to larger $d$,
constructing datasets of higher height, and developing symbolic or
neuro-symbolic methods for approximating invariants in degrees where
$\cR_{(d+1,d-1)}$ is not yet known are promising directions.

In summary, this paper establishes a complete invariant–theoretic description
of the moduli space $\M_3^1$, identifies all automorphism loci explicitly, and
demonstrates how the geometry of these loci parallels classical phenomena from
the theory of hyperelliptic and superelliptic curves.  
This provides both a conceptual bridge between two active areas of arithmetic
geometry and a foundation for further investigations in arithmetic dynamics
and invariant theory.

%*********************************************
%\bibliographystyle{amsplain}
%\bibliographystyle{amsalpha}
\bibliography{ref-2}

\appendix
\section{Invariants of rational cubics}\label{app-a}
% We have the following expressions for invariants: 
%
\begin{small}
\[
\begin{split}
\xi_0 = &  2 (3 \c_2 \c_0 + \c_2 \c_5 - \c_1^2 - 2 \c_1 \c_6 + 9 \c_0 \c_7 + 3 \c_7 \c_5 - \c_6^2) \\
\xi_1  = & - \frac 1 6     (12 \c_3 \c_4 + 3 \c_2 \c_0 - 3 \c_2 \c_5 - \c_1^2 + 2 \c_1 \c_6 - 3 \c_0 \c_7 + 3 \c_7 \c_5 - \c_6^2) \\
\xi_2 = &  - \frac 1 {72}   (72 \c_3 \c_1 \c_4 + 27 \c_3 \c_0^2 - 54 \c_3 \c_0 \c_5 - 72 \c_3 \c_6 \c_4 + 27 \c_3 \c_5^2 - 27 \c_2^2 \c_4  - 9 \c_2 \c_1 \c_0 \\
& + 9 \c_2 \c_1 \c_5  + 9 \c_2 \c_0 \c_6 + 54 \c_2 \c_7 \c_4 - 9 \c_2 \c_6 \c_5 + 2 \c_1^3 - 6 \c_1^2 \c_6 + 9 \c_1 \c_0 \c_7  - 9 \c_1 \c_7 \c_5 \\
& + 6 \c_1 \c_6^2  - 9 \c_0 \c_7 \c_6  - 27 \c_7^2 \c_4 + 9 \c_7 \c_6 \c_5 - 2 \c_6^3) \\
\xi_3  = &  \frac 1 3  (27 \c_3 \c_0^2 + 18 \c_3 \c_0 \c_5 + 3 \c_3 \c_5^2 - 3 \c_2^2 \c_4 - 9 \c_2 \c_1 \c_0 + \c_2 \c_1 \c_5 - 15 \c_2 \c_0 \c_6 + 9 \c_7 \c_6 \c_5 \\
& - 2 \c_6^3  - 18 \c_2 \c_7 \c_4 - \c_2 \c_6 \c_5 + 2 \c_1^3 + 2 \c_1^2 \c_6 + 9 \c_1 \c_0 \c_7 + 15 \c_1 \c_7 \c_5 - 2 \c_1 \c_6^2 \\
& - 9 \c_0 \c_7 \c_6  - 27 \c_7^2 \c_4 )  \\
\xi_4  = & - \frac 1 9 (18 \c_3 \c_2 \c_0 \c_4 + 6 \c_3 \c_2 \c_5 \c_4 + 12 \c_3 \c_1^2 \c_4 - 36 \c_3 \c_1 \c_0 \c_5 + 24 \c_3 \c_1 \c_6 \c_4  - 12 \c_3 \c_1 \c_5^2 \\
& + 54 \c_3 \c_0^2 \c_6  + 54 \c_3 \c_0 \c_7 \c_4 + 18 \c_3 \c_7 \c_5 \c_4 + 12 \c_3 \c_6^2 \c_4 - 6 \c_3 \c_6 \c_5^2  - 6 \c_2^2 \c_1 \c_4 + 9 \c_2^2 \c_0^2 \\
& + 6 \c_2^2 \c_0 \c_5 - 12 \c_2^2 \c_6 \c_4 + 3 \c_2^2 \c_5^2  - 6 \c_2 \c_1^2 \c_0 + 2 \c_2 \c_1^2 \c_5  - 6 \c_2 \c_1 \c_0 \c_6 + 2 \c_2 \c_1 \c_6 \c_5 \\
& - 18 \c_2 \c_0^2 \c_7 - 24 \c_2 \c_0 \c_7 \c_5 + 6 \c_2 \c_0 \c_6^2 - 36 \c_2 \c_7 \c_6 \c_4  + 6 \c_2 \c_7 \c_5^2 + 2 \c_2 \c_6^2 \c_5 + \c_1^4 \\
& + 6 \c_1^2 \c_0 \c_7 + 6 \c_1^2 \c_7 \c_5 - 2 \c_1^2 \c_6^2 - 6 \c_1 \c_0 \c_7 \c_6  + 54 \c_1 \c_7^2 \c_4 - 6 \c_1 \c_7 \c_6 \c_5 \\
& + 27 \c_0^2 \c_7^2 - 18 \c_0 \c_7^2 \c_5 + 6 \c_0 \c_7 \c_6^2 + 9 \c_7^2 \c_5^2 - 6 \c_7 \c_6^2 \c_5 + \c_6^4) \\
\xi_5  = & - \frac 1 4 (36 \c_3^2 \c_1 \c_0^2 \c_4 + 24 \c_3^2 \c_1 \c_0 \c_5 \c_4 + 4 \c_3^2 \c_1 \c_5^2 \c_4 + 27 \c_3^2 \c_0^4 + 36 \c_3^2 \c_0^2 \c_6 \c_4 - 18 \c_3^2 \c_0^2 \c_5^2  \\
&  + 24 \c_3^2 \c_0 \c_6 \c_5 \c_4 - 8 \c_3^2 \c_0 \c_5^3 + 4 \c_3^2 \c_6 \c_5^2 \c_4 - \c_3^2 \c_5^4 + 4 \c_3 \c_2^2 \c_1 \c_4^2 - 6 \c_3 \c_2^2 \c_0^2 \c_4 - 8 \c_3 \c_2^2 \c_0 \c_5 \c_4\\
&  + 4 \c_3 \c_2^2 \c_6 \c_4^2 - 2 \c_3 \c_2^2 \c_5^2 \c_4 - 8 \c_3 \c_2 \c_1^2 \c_0 \c_4 - 8 \c_3 \c_2 \c_1^2 \c_5 \c_4 - 18 \c_3 \c_2 \c_1 \c_0^3 - 18 \c_3 \c_2 \c_1 \c_0^2 \c_5 \\
& - 16 \c_3 \c_2 \c_1 \c_0 \c_6 \c_4 + 2 \c_3 \c_2 \c_1 \c_0 \c_5^2 + 24 \c_3 \c_2 \c_1 \c_7 \c_4^2 - 16 \c_3 \c_2 \c_1 \c_6 \c_5 \c_4 + 2 \c_3 \c_2 \c_1 \c_5^3 + 18 \c_3 \c_2 \c_0^3 \c_6 \\
& + 6 \c_3 \c_2 \c_0^2 \c_6 \c_5 - 24 \c_3 \c_2 \c_0 \c_7 \c_5 \c_4 - 8 \c_3 \c_2 \c_0 \c_6^2 \c_4 + 6 \c_3 \c_2 \c_0 \c_6 \c_5^2 + 24 \c_3 \c_2 \c_7 \c_6 \c_4^2 - 8 \c_3 \c_2 \c_7 \c_5^2 \c_4\\
&  - 8 \c_3 \c_2 \c_6^2 \c_5 \c_4 + 2 \c_3 \c_2 \c_6 \c_5^3 + 4 \c_3 \c_1^3 \c_0^2 + 8 \c_3 \c_1^3 \c_0 \c_5 + 4 \c_3 \c_1^3 \c_5^2 - 12 \c_3 \c_1^2 \c_0^2 \c_6 + 24 \c_3 \c_1^2 \c_0 \c_7 \c_4\\
& - 8 \c_3 \c_1^2 \c_0 \c_6 \c_5 - 8 \c_3 \c_1^2 \c_7 \c_5 \c_4 + 4 \c_3 \c_1^2 \c_6 \c_5^2 + 18 \c_3 \c_1 \c_0^3 \c_7 - 6 \c_3 \c_1 \c_0^2 \c_7 \c_5 + 48 \c_3 \c_1 \c_0 \c_7 \c_6 \c_4 \\
& + 14 \c_3 \c_1 \c_0 \c_7 \c_5^2 - 16 \c_3 \c_1 \c_0 \c_6^2 \c_5 + 36 \c_3 \c_1 \c_7^2 \c_4^2 - 16 \c_3 \c_1 \c_7 \c_6 \c_5 \c_4 + 6 \c_3 \c_1 \c_7 \c_5^3 + 18 \c_3 \c_0^3 \c_7 \c_6 \\
& + 54 \c_3 \c_0^2 \c_7^2 \c_4 - 42 \c_3 \c_0^2 \c_7 \c_6 \c_5 + 16 \c_3 \c_0^2 \c_6^3 + 24 \c_3 \c_0 \c_7 \c_6^2 \c_4 - 10 \c_3 \c_0 \c_7 \c_6 \c_5^2 + 36 \c_3 \c_7^2 \c_6 \c_4^2 \\
& - 6 \c_3 \c_7^2 \c_5^2 \c_4 - 8 \c_3 \c_7 \c_6^2 \c_5 \c_4 + 2 \c_3 \c_7 \c_6 \c_5^3 - \c_2^4 \c_4^2 + 2 \c_2^3 \c_1 \c_0 \c_4 + 2 \c_2^3 \c_1 \c_5 \c_4 + 4 \c_2^3 \c_0^3 + 4 \c_2^3 \c_0^2 \c_5 \\
& + 6 \c_2^3 \c_0 \c_6 \c_4 - 8 \c_2^3 \c_7 \c_4^2 + 2 \c_2^3 \c_6 \c_5 \c_4 - \c_2^2 \c_1^2 \c_0^2 - 2 \c_2^2 \c_1^2 \c_0 \c_5 - \c_2^2 \c_1^2 \c_5^2 + 2 \c_2^2 \c_1 \c_0^2 \c_6  \\
& - 10 \c_2^2 \c_1 \c_0 \c_7 \c_4 + 6 \c_2^2 \c_1 \c_7 \c_5 \c_4 + 4 \c_2^2 \c_1 \c_6^2 \c_4 - 2 \c_2^2 \c_1 \c_6 \c_5^2 - 12 \c_2^2 \c_0^3 \c_7 - 4 \c_2^2 \c_0^2 \c_7 \c_5 + 3 \c_2^2 \c_0^2 \c_6^2 \\
& + 14 \c_2^2 \c_0 \c_7 \c_6 \c_4 - 8 \c_2^2 \c_0 \c_7 \c_5^2 + 2 \c_2^2 \c_0 \c_6^2 \c_5 - 18 \c_2^2 \c_7^2 \c_4^2 + 2 \c_2^2 \c_7 \c_6 \c_5 \c_4 + 4 \c_2^2 \c_6^3 \c_4 - \c_2^2 \c_6^2 \c_5^2 \\
& + 2 \c_2 \c_1^2 \c_0^2 \c_7 + 4 \c_2 \c_1^2 \c_0 \c_7 \c_5 - 16 \c_2 \c_1^2 \c_7 \c_6 \c_4 + 2 \c_2 \c_1^2 \c_7 \c_5^2 - 8 \c_2 \c_1 \c_0^2 \c_7 \c_6 - 42 \c_2 \c_1 \c_0 \c_7^2 \c_4 \\
& + 8 \c_2 \c_1 \c_0 \c_7 \c_6 \c_5 + 6 \c_2 \c_1 \c_7^2 \c_5 \c_4 - 8 \c_2 \c_1 \c_7 \c_6^2 \c_4 + 24 \c_2 \c_0^2 \c_7^2 \c_5 - 10 \c_2 \c_0^2 \c_7 \c_6^2 - 6 \c_2 \c_0 \c_7^2 \c_6 \c_4 \\
& - 4 \c_2 \c_0 \c_7^2 \c_5^2 + 4 \c_2 \c_0 \c_7 \c_6^2 \c_5 - 18 \c_2 \c_7^2 \c_6 \c_5 \c_4 + 4 \c_2 \c_7^2 \c_5^3 + 8 \c_2 \c_7 \c_6^3 \c_4 - 2 \c_2 \c_7 \c_6^2 \c_5^2 + 16 \c_1^3 \c_7^2 \c_4 \\
& + 3 \c_1^2 \c_0^2 \c_7^2 - 10 \c_1^2 \c_0 \c_7^2 \c_5 + 3 \c_1^2 \c_7^2 \c_5^2 + 6 \c_1 \c_0^2 \c_7^2 \c_6 + 18 \c_1 \c_0 \c_7^3 \c_4 - 8 \c_1 \c_0 \c_7^2 \c_6 \c_5 + 18 \c_1 \c_7^3 \c_5 \c_4 \\
& - 12 \c_1 \c_7^2 \c_6^2 \c_4 + 2 \c_1 \c_7^2 \c_6 \c_5^2 + 3 \c_0^2 \c_7^2 \c_6^2 + 18 \c_0 \c_7^3 \c_6 \c_4 - 12 \c_0 \c_7^3 \c_5^2 + 2 \c_0 \c_7^2 \c_6^2 \c_5 + 27 \c_7^4 \c_4^2   \\
& + 4 \c_7^3 \c_5^3 + 4 \c_7^2 \c_6^3 \c_4 - 18 \c_7^3 \c_6 \c_5 \c_4 - \c_7^2 \c_6^2 \c_5^2) 
\end{split}
\]
\[
\begin{split}
J_6 & =81 \c_3^4 \c_0^2 - 54 \c_3^3 \c_2 \c_1 \c_0 + 54 \c_3^3 \c_2 \c_0 \c_4 + 12 \c_3^3 \c_1^3 - 36 \c_3^3 \c_1^2 \c_4 + 54 \c_3^3 \c_1 \c_0 \c_5  - 108 \c_3^3 \c_1 \c_4^2 + 108 \c_3^3 \c_0^2 \c_6 \\
& + 378 \c_3^3 \c_0 \c_5 \c_4 + 324 \c_3^3 \c_4^3 + 12 \c_3^2 \c_2^3 \c_0 - 3 \c_3^2 \c_2^2 \c_1^2  + 6 \c_3^2 \c_2^2 \c_1 \c_4 - 36 \c_3^2 \c_2^2 \c_0 \c_5 + 45 \c_3^2 \c_2^2 \c_4^2 + 12 \c_7 \c_5^3 \c_4^2 \\
& + 6 \c_3^2 \c_2 \c_1^2 \c_5 - 126 \c_3^2 \c_2 \c_1 \c_0 \c_6  - 60 \c_3^2 \c_2 \c_1 \c_5 \c_4 - 162 \c_3^2 \c_2 \c_0 \c_6 \c_4 - 108 \c_3^2 \c_2 \c_0 \c_5^2 - 234 \c_3^2 \c_2 \c_5 \c_4^2 + 28 \c_3^2 \c_1^3 \c_6 \\
 & - 18 \c_3^2 \c_1^2 \c_0 \c_7 + 12 \c_3^2 \c_1^2 \c_6 \c_4 + 45 \c_3^2 \c_1^2 \c_5^2 - 108 \c_3^2 \c_1 \c_0 \c_7 \c_4 - 18 \c_3^2 \c_1 \c_0 \c_6 \c_5  - 252 \c_3^2 \c_1 \c_6 \c_4^2 + 150 \c_3^2 \c_1 \c_5^2 \c_4 \\
 &+ 54 \c_3^2 \c_0^2 \c_6^2 - 162 \c_3^2 \c_0 \c_7 \c_4^2 + 162 \c_3^2 \c_0 \c_6 \c_5 \c_4  - 60 \c_3^2 \c_0 \c_5^3 - 108 \c_3^2 \c_6 \c_4^3 + 45 \c_3^2 \c_5^2 \c_4^2 + 40 \c_3 \c_2^3 \c_0 \c_6 \\
 & - 10 \c_3 \c_2^2 \c_1^2 \c_6 + 48 \c_3 \c_2^2 \c_1 \c_0 \c_7  + 20 \c_3 \c_2^2 \c_1 \c_6 \c_4 + 144 \c_3 \c_2^2 \c_0 \c_7 \c_4 + 72 \c_3 \c_2^2 \c_0 \c_6 \c_5 + 150 \c_3 \c_2^2 \c_6 \c_4^2 - 10 \c_3 \c_2 \c_1^3 \c_7 \\
 & - 18 \c_3 \c_2 \c_1^2 \c_7 \c_4 - 28 \c_3 \c_2 \c_1^2 \c_6 \c_5 + 96 \c_3 \c_2 \c_1 \c_0 \c_7 \c_5 - 66 \c_3 \c_2 \c_1 \c_0 \c_6^2 + 162 \c_3 \c_2 \c_1 \c_7 \c_4^2 - 54 \c_7 \c_6 \c_5 \c_4^3   \\
 & - 104 \c_3 \c_2 \c_1 \c_6 \c_5 \c_4 + 288 \c_3 \c_2 \c_0 \c_7 \c_5 \c_4 - 126 \c_3 \c_2 \c_0 \c_6^2 \c_4 + 24 \c_3 \c_2 \c_0 \c_6 \c_5^2 + 378 \c_3 \c_2 \c_7 \c_4^3 + 81 \c_7^2 \c_4^4 \\
 & - 60 \c_3 \c_2 \c_6 \c_5 \c_4^2 - 22 \c_3 \c_1^3 \c_7 \c_5 + 20 \c_3 \c_1^3 \c_6^2 - 12 \c_3 \c_1^2 \c_0 \c_7 \c_6 - 126 \c_3 \c_1^2 \c_7 \c_5 \c_4 + 68 \c_3 \c_1^2 \c_6^2 \c_4 + 4 \c_0 \c_6^2 \c_5^3\\
 & + 6 \c_3 \c_1^2 \c_6 \c_5^2 - 72 \c_3 \c_1 \c_0 \c_7 \c_6 \c_4 + 48 \c_3 \c_1 \c_0 \c_7 \c_5^2 - 30 \c_3 \c_1 \c_0 \c_6^2 \c_5 - 162 \c_3 \c_1 \c_7 \c_5 \c_4^2 + 48 \c_0 \c_7 \c_6 \c_5^2 \c_4\\
 & + 12 \c_3 \c_1 \c_6^2 \c_4^2 + 20 \c_3 \c_1 \c_6 \c_5^2 \c_4 + 12 \c_3 \c_0^2 \c_6^3 - 108 \c_3 \c_0 \c_7 \c_6 \c_4^2 + 144 \c_3 \c_0 \c_7 \c_5^2 \c_4 - 18 \c_3 \c_0 \c_6^2 \c_5 \c_4  - 10 \c_0 \c_6^3 \c_5 \c_4 \\
 & - 8 \c_3 \c_0 \c_6 \c_5^3 + 54 \c_3 \c_7 \c_5 \c_4^3 - 36 \c_3 \c_6^2 \c_4^3 + 6 \c_3 \c_6 \c_5^2 \c_4^2 - 16 \c_2^4 \c_0 \c_7 + 4 \c_2^3 \c_1^2 \c_7 - 8 \c_2^3 \c_1 \c_7 \c_4  - 64 \c_2^3 \c_0 \c_7 \c_5 \\
 & + 12 \c_2^3 \c_0 \c_6^2 - 60 \c_2^3 \c_7 \c_4^2 + 20 \c_2^2 \c_1^2 \c_7 \c_5 - 3 \c_2^2 \c_1^2 \c_6^2 + 16 \c_2^2 \c_1 \c_0 \c_7 \c_6  + 12 \c_6^3 \c_4^3 - 3 \c_6^2 \c_5^2 \c_4^2  - 16 \c_0 \c_7 \c_5^4  \\
 & + 24 \c_2^2 \c_1 \c_7 \c_5 \c_4 + 6 \c_2^2 \c_1 \c_6^2 \c_4 + 48 \c_2^2 \c_0 \c_7 \c_6 \c_4 - 96 \c_2^2 \c_0 \c_7 \c_5^2 + 28 \c_2^2 \c_0 \c_6^2 \c_5 - 108 \c_2^2 \c_7 \c_5 \c_4^2  + 45 \c_2^2 \c_6^2 \c_4^2 \\
 & - 6 \c_2 \c_1^3 \c_7 \c_6 - 30 \c_2 \c_1^2 \c_7 \c_6 \c_4 + 28 \c_2 \c_1^2 \c_7 \c_5^2 - 10 \c_2 \c_1^2 \c_6^2 \c_5 + 32 \c_2 \c_1 \c_0 \c_7 \c_6 \c_5  - 10 \c_2 \c_1 \c_0 \c_6^3 - 18 \c_2 \c_1 \c_7 \c_6 \c_4^2 \\
 & + 72 \c_2 \c_1 \c_7 \c_5^2 \c_4 - 28 \c_2 \c_1 \c_6^2 \c_5 \c_4 + 96 \c_2 \c_0 \c_7 \c_6 \c_5 \c_4 - 64 \c_2 \c_0 \c_7 \c_5^3  - 22 \c_2 \c_0 \c_6^3 \c_4 + 20 \c_2 \c_0 \c_6^2 \c_5^2 + 54 \c_2 \c_7 \c_6 \c_4^3 \\
 & - 36 \c_2 \c_7 \c_5^2 \c_4^2 + 6 \c_2 \c_6^2 \c_5 \c_4^2 + \c_1^4 \c_7^2 + 12 \c_1^3 \c_7^2 \c_4  - 10 \c_1^3 \c_7 \c_6 \c_5 + 4 \c_1^3 \c_6^3 - 2 \c_1^2 \c_0 \c_7 \c_6^2 + 54 \c_1^2 \c_7^2 \c_4^2 \\
 & - 66 \c_1^2 \c_7 \c_6 \c_5 \c_4 + 12 \c_1^2 \c_7 \c_5^3 + 20 \c_1^2 \c_6^3 \c_4  - 3 \c_1^2 \c_6^2 \c_5^2 - 12 \c_1 \c_0 \c_7 \c_6^2 \c_4 + 16 \c_1 \c_0 \c_7 \c_6 \c_5^2 - 6 \c_1 \c_0 \c_6^3 \c_5 \\
 & + 108 \c_1 \c_7^2 \c_4^3 - 126 \c_1 \c_7 \c_6 \c_5 \c_4^2  + 40 \c_1 \c_7 \c_5^3 \c_4 + 28 \c_1 \c_6^3 \c_4^2 - 10 \c_1 \c_6^2 \c_5^2 \c_4 + \c_0^2 \c_6^4 - 18 \c_0 \c_7 \c_6^2 \c_4^2 
\end{split}
\]
\end{small}

\end{document}